\documentclass[10pt,a4paper]{article}
\usepackage{amsmath,amssymb,amsthm,esint,bm}
\usepackage{mathrsfs}
\usepackage{bookmark}
\allowdisplaybreaks[3]

\usepackage{xcolor}

\usepackage[utf8]{inputenc}   
\usepackage[T1]{fontenc}      
\usepackage{lmodern}

\usepackage{citeref}
\usepackage[notref,notcite]{showkeys}

\newcommand{\ainc}[1]{\hyperref[ainc]{{\normalfont(aInc){\ensuremath{_{#1}}}}}}
\newcommand{\adec}[1]{\hyperref[adec]{{\normalfont(aDec){\ensuremath{_{#1}}}}}}
\newcommand{\inc}[1]{\hyperref[inc]{{\normalfont(Inc){\ensuremath{_{#1}}}}}}
\newcommand{\dec}[1]{\hyperref[dec]{{\normalfont(Dec){\ensuremath{_{#1}}}}}}

\allowdisplaybreaks

\usepackage[T1]{fontenc}
\usepackage[utf8]{inputenc}
\usepackage{authblk}
\usepackage{amsmath,amssymb,amsthm,esint,bm}
\usepackage{mathrsfs}
\usepackage{bookmark}
\usepackage{amsmath}

\newtheorem{theorem}{Theorem}[section]
\newtheorem{lemma}[theorem]{Lemma}

\theoremstyle{definition}
\newtheorem{remark}[theorem]{Remark}
\newtheorem{definition}[theorem]{Definition}

\numberwithin{equation}{section}

\newcommand{\R}{\mathbb{R}}

\newcommand{\ls}{\leqslant}
\newcommand{\rs}{\geqslant}

\newcommand{\holder}{H{\"o}lder}

\title{ Global Weighted Gradient Estimates for  Double Obstacle Problems with Orlicz Growth  }
\author[a]{Qi Xiong}
\author[b]{Xing Fu\thanks{Corresponding author.}}

\affil[a]{School of Mathematics, Southwest Jiaotong University, Chengdu, Sichuan, 610031,  China}
\affil[b]{Hubei Key Laboratory of Applied Mathematics,
	Faculty of Mathematics and Statistics, Hubei University, Wuhan 430062, China}

\usepackage{hyperref}
\begin{document}
\arraycolsep=1pt
\maketitle
\footnotetext[1]{E-mail: xq@swjtu.edu.cn(Q. Xiong),  xingfu@hubu.edu.cn (X. Fu).}

\begin{abstract}
We study an irregular double obstacle problem with Orlicz growth on a bounded nonsmooth domain. The nonlinear operator is assumed to satisfy standard monotonicity and   growth conditions, together with a small bounded mean oscillation condition in the spatial variable, while the domain is Reifenberg flat. We establish a global weighted Orlicz estimate for the gradient of the weak solution in terms of the nonhomogeneous datum, the gradients of the two obstacles, and the boundary datum.

\medskip

 Mathematics Subject classification (2020): 35J62; 46E30;   35B65.

Keywords: Double obstacle problem;  weighted Orlicz estimate; Reifenberg flat domain.
\end{abstract}


\section{Introduction and main results}

 \hskip\parindent  The classical obstacle problem seeks to find minimizers of an energy functional 
 subject to a constraint representing a physical barrier. This mathematical model 
 is naturally formulated within the framework of variational inequalities, a field 
 originating from the calculus of variations and nonlinear partial differential equations (PDEs). 
 Over the past decades, the theory of variational inequalities has proven to be an 
 indispensable tool in various applied sciences, such as fluid filtration in porous 
 media, elasto-plasticity, and optimal control; see, for example, \cite{bsy,bsy2,cho,kl,rt1,s26,s28,xiong1,xiong2,xiong3}.

 In this paper, we consider double obstacle problems with Orlicz growth.
Let $\Omega \subset \mathbb{R}^{n}, n \geq 2$, be a bounded open domain. Assume that   $\psi_{1}, \psi_{2} \in W^{1, G}(\Omega)$ are obstacle functions   and $h\in W^{1, G}(\Omega)$ is a given boundary data with $\psi_{1} \leq h \leq \psi_{2}$ a.e. in $\Omega$, where $W^{1, G}(\Omega)$ that will be specified later in Section 2 with an $N$-function $G$ defined in \eqref{g}.
Define a convex admissible set
$$
\mathcal{A}_{h}(\Omega)=\left\{\varphi \in W_{h}^{1, G}(\Omega): \psi_{1} \leq \varphi \leq \psi_{2} \text { a.e. in } \Omega\right\} .
$$

We deal with a weak solution $u \in \mathcal{A}_{h}(\Omega)$ to the double obstacle problem which means that it satisfies the variational inequality
 
\begin{equation}\label{u}
	\int_{\Omega}  a(x, D u) \cdot D(v-u) \mathrm{d} x \geqslant \int_{\Omega} b(x, F ) \cdot D(v-u) \mathrm{d} x  
\end{equation}
for all $v \in \mathcal{A}_{h}(\Omega)$. Here we
assume that $F \in L^G(\Omega;\mathbb{R}^n)$ and  $a=a(x,\eta): \Omega \times\mathbb{R}^n \rightarrow \mathbb{R}^n$ is measurable for every $\eta \in \mathbb{R}^n$  and differentiable for almost every $x\in \Omega$  and there exist constants $0<l\leqslant 1 \leqslant L<+\infty$ such that for all $x \in \Omega,\eta,\lambda \in \mathbb{R}^n$,   
\begin{eqnarray} \label{u1}
	\left\{\begin{array}{r@{}c@{}ll}
		&&D_{\eta} a(x,\eta )\lambda \cdot \lambda \geqslant l\dfrac{g(|\eta|)}{|\eta|}|\lambda|^2 \,, \\[0.05cm]
		&&|a(x,\eta)|+|\eta||D_{\eta} a(x,\eta )|\leqslant Lg(|\eta|)\,, \\[0.05cm]
	\end{array}\right.
\end{eqnarray}
where $D_{\eta}$ denotes the differentiation in $\eta$.  
Moreover, let $b(x,\eta): \Omega \times\mathbb{R}^n \rightarrow \mathbb{R}^n$  satisfy
\begin{equation}\label{u2}
	|b(x,\eta)|\leqslant L g(|\eta|).
\end{equation}
Here $g(t) : [0,+\infty)\rightarrow [0,+\infty)$ satisfies
\begin{eqnarray}\label{u3}
	\left\{\begin{array}{r@{}c@{}ll}
		&&g(t)=0 \ \ \ \Leftrightarrow \ \ \   t=0 \,, \\[0.05cm]
		&&g(\cdot)\in C^{1}(\mathbb{R}^+)\,, \\[0.05cm]
		&&   i_{g}=: \inf_{t>0}\frac{tg'(t)}{g(t)}\leq \sup_{t>0}\frac{tg'(t)}{g(t)}=:s_{g}<\infty, \ \ \  where \ \  0< i_{g} \leqslant 1 \leqslant s_{g} <\infty. \, \\[0.05cm]
	\end{array}\right.
\end{eqnarray}
Define
\begin{equation}\label{g}
	G(t):= \int_0^tg(\tau)\,d\tau  \ \ \ \mbox{for}\ \ t\geq0.
\end{equation}

The existence and uniqueness of weak solutions to the variational inequality \eqref{u} were established in \cite{rt1}. Over the past few decades, obstacle problems have been extensively investigated; we refer to \cite{xiong1, xiong2, xiong3, xiong4, xiong5, xiong6} for   gradient estimates, and to \cite{l2, rt1} for H\"older regularity results.

The aim of this paper is to establish global weighted
Calder\'on--Zygmund-type estimates for the double obstacle problem
\eqref{u}. Calder\'on--Zygmund theory is a classical regularity theory
which, broadly speaking, asserts that the regularity of a solution is
determined by that of the given data, including the nonhomogeneous term,
the coefficients, the obstacles, and the boundary of the domain; see \cite{ming1,ming2}. Since the celebrated Calder\'on--Zygmund estimate was first
introduced in \cite{cz1}, it has played a fundamental role in the regularity
theory of elliptic and parabolic equations. 
A fundamental contribution to nonlinear gradient estimates was made by
Iwaniec in \cite{i1}, where a local Calder\'on--Zygmund estimate was established
for a nonhomogeneous elliptic $p$-Laplacian equation. DiBenedetto and
Manfredi extended this result to elliptic systems in \cite{ddm}. Subsequently,
Caffarelli and Peral generalized the estimate in \cite{i1} to a broader class
of elliptic equations of $p$-Laplacian type in \cite{cpp}. These results were later extended to the parabolic setting by
Acerbi and Mingione in \cite{am3}. Moreover, Byun and Wang studied a general elliptic equation whose
nonlinearity $a(x,Du)$ satisfies the standard ellipticity and
$p$-growth conditions and established the corresponding
Calder\'on-Zygmund estimate under a suitable smallness assumption on
the BMO oscillation of $a$ with respect to the spatial variable $x$;
see \cite{bw}.  Recently, there have been active research activities on the regularity
theory for general nonlinear elliptic problems. In particular, Verde
\cite{v} proved global Calder\'on--Zygmund estimates over the whole domain
$\mathbb{R}^{n}$ for weak solutions to the elliptic system 
\begin{equation}\label{16}
	-\operatorname{div}\left( \frac{g(|Du|)}{|Du|}Du\right) 
	=
	-\operatorname{div}\left( \frac{g(|DF|)}{|DF|}DF\right),
\end{equation}
where $g(t)$ satisfies \eqref{u3}. Yao and Zhou \cite{yz1}  obtained local
$L^{q}$-estimates for every $q\geq 1$ for weak solutions of \eqref{16}.
Furthermore, Byun and Cho \cite{bc} extended this result to a global gradient
estimate in the framework of general Orlicz spaces on a Reifenberg
domain. Moreover,  Cho \cite{cho1} established   global Calder\'on-Zygmund estimates for  the
zero-Dirichlet problem involving general nonlinear operators with  Orlicz growth. 
 
In obstacle problems, it is well known that the regularity of solutions is generally limited by the smoothness of the obstacles. For problems involving general nonlinear operators of $p$-Laplace type over nonsmooth domains, global Calder\'on-Zygmund estimates were established by Byun, Cho, and Wang \cite{bcw}, while the corresponding weighted estimates were subsequently developed in \cite{bcp}. Furthermore, such regularity estimates have been extended to the parabolic setting by B{\"o}gelein, Duzaar, and Mingione \cite{bdm}. More recently, Cho \cite{cho} extended these results to global weighted Orlicz estimates for obstacle problems with Orlicz growth over bounded nonsmooth domains. Regarding double obstacle problems, Byun and Ryu \cite{br} established local Calder\'on-Zygmund estimates for $p$-Laplace type operators. Subsequently, Byun, Liang, and Ok \cite{blo} extended these results to obstacle problems under Orlicz growth with zero Dirichlet boundary data over bounded nonsmooth domains. 

In the present paper, our main objective is to establish global weighted estimates for double obstacle problems with non-zero Dirichlet boundary data over Reifenberg domains. To overcome the difficulties arising from the non-zero boundary data, we first construct a suitable comparison estimate that reduces the non-homogeneous problem to a homogeneous one. Through a sequence of further comparison arguments, the double obstacle problem is then transformed into a reference equation with zero Dirichlet boundary conditions. Finally, the main result is achieved by employing a Vitali-type covering lemma.

We now proceed to outline our main results, starting with the presentation of key definitions, notations, and assumptions.

Let 
\[
\theta(a, B_\varrho(x_0))(x) := \sup_{\eta \in \mathbb{R}^n \setminus \{0\}} \frac{|a(x, \eta) - \overline{a}_{B_\varrho(x_0)}(\eta)|}{g(|\eta|)}
\]
where 
\[
\overline{a}_{B_\varrho(x_0)}(\eta) := \fint_{B_\varrho(x_0)} a(x, \eta) \,  {d}x.
\]
 \begin{definition}
   We say that $(a, \Omega)$ is $(\delta, R)$-vanishing for some $\delta,R > 0$, if
\[
\sup_{x_0 \in \mathbb{R}^n} \sup_{0 < \varrho < R} \fint_{B_\varrho(x_0)} \theta(a, B_\varrho(x_0))(x) \,  {d}x \leq \delta,
\]
and if for each $r \in (0, R]$ and each $x \in \partial\Omega$ there exists a coordinate system $\{y_1, \cdots, y_n\}$ such that in this coordinate system the origin is at $x$ and it satisfies
\[
\{y \in B_r(0) : y_n > \delta r\} \subset B_r(0) \cap \Omega \subset \{y \in B_r(0) : y_n > -\delta r\}. 
\]
 \end{definition}
\begin{remark}\label{rei}
	For every $x \in \partial \Omega$ and $r \in (0,R]$ we choose a coordinate system $(z_1,\dots,z_n)$ with the origin at some interior point of $  \Omega$ such that in this coordinate system $x=- \delta re_n$ and
	$$
	B_{\frac{7}{8}r}(0)\cap \{z_n>0\} \ \subset\ \Omega\cap B_{\frac{7}{8}r}(0)\ \subset\ B_{\frac{7}{8}r}(0)\cap \{z_n>-2\delta r\}.
	$$

	Note that a domain with Lipschitz boundary with Lipschitz seminorm $\delta\in(0,1)$
	is $(\delta,R)$-Reifenberg flat for some $R>0$ and that the ball $B_r$ is $(\delta,2\delta r)$-Reifenberg flat for any $\delta\in (0,\frac{1}{2})$.
\end{remark}

The main result of this paper reads as follows.
 \begin{theorem}\label{weight}
 	Given  an N-function $\Phi \in \Delta_2 \cap\nabla_2$, let $\omega$ be a weight in $A_{\Phi}$. Assume that $|F|\in L^{\Phi\circ G}_{\omega}(\Omega), |D\psi_i|\in L^{\Phi\circ G}_{\omega}(\Omega), i=1,2,$ $h\in W^{1,G}(\Omega)$ is a given boundary datum with $\psi_1 \leq h \leq \psi_2$ a.\,e. in $\Omega$ and $ |Dh|\in L^{\Phi\circ G}_{\omega}(\Omega)$. Then there exist a small positive constant
 	$
 	\delta=\delta(n,l,L,i_g,s_g, \Phi,\omega, \Omega)
 	$
 	and a positive constant
 	$
 	c=c(n,l,L,i_g,s_g, \Phi,\omega, \Omega)
 	$
 	such that, if $(a,\Omega)$ satisfies the $(\delta,R)$-vanishing
 	condition,
 and $u \in \mathcal{A}_{h}(\Omega)$ solves the variational inequality \eqref{u}, then
 	\begin{eqnarray}\label{uff}\nonumber
 		\int_{\Omega}(\Phi\circ G)(|Du|)\omega(x)dx& \ls& c\int_{\Omega}(\Phi\circ G)(|F|)\omega(x)dx+c\int_{\Omega}(\Phi\circ G)(|D\psi_1|)\omega(x)dx\\
 		&&\hspace{0.1cm}+c\int_{\Omega}(\Phi\circ G)(|D\psi_2|)\omega(x)dx+c\int_{\Omega}(\Phi\circ G)(|Dh|)\omega(x)dx,
 	\end{eqnarray}
 	where $G$ is defined as in \eqref{g}.
 \end{theorem}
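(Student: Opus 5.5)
We would follow the now-standard perturbation/covering scheme of Byun--Wang type, adapted to Orlicz growth and to two obstacles, and organised around level sets of $\mathcal{M}(G(|Du|))$, where $\mathcal{M}$ is the Hardy--Littlewood maximal operator.

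\textbf{Step 1: reduction to zero boundary data.} We first obtain the unweighted energy bound. Testing \eqref{u} with $v=h\in\mathcal{A}_h(\Omega)$ and using \eqref{u1}, \eqref{u2} together with Young's inequality for $G$ (which is valid since $G\in\Delta_2\cap\nabla_2$ by \eqref{u3}) gives
\[
\int_\Omega G(|Du|)\,dx\ls c\int_\Omega \bigl[G(|F|)+G(|Dh|)\bigr]\,dx .
\]
To handle the nonzero trace, we treat $h$ as a third ``datum'' and set $\tilde F$ so that $G(|\tilde F|)\approx G(|F|)+G(|D\psi_1|)+G(|D\psi_2|)+G(|Dh|)$. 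All comparison estimates below will then be stated with $\tilde F$ on the right-hand side.

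\textbf{Step 2: local comparison estimates.} This is done in interior balls and in boundary balls $B_{r}$ centred near $\partial\Omega$, in the flat coordinates of Remark \ref{rei}. Assume the normalisation $\fint_{\Omega_{5r}}G(|Du|)\ls1$ and $\fint_{\Omega_{5r}}G(|\tilde F|)\ls\varepsilon$. We then carry out three successive comparisons.
\begin{enumerate}
\item Let $w$ solve the double obstacle problem in $\Omega_{4r}$ with $F=0$ and with $w-u$ having zero trace. Testing the two variational inequalities against each other gives $\fint G(|Du-Dw|)\ls c\,\varepsilon^{\gamma}$ for some $\gamma>0$.
\item Let $v$ solve the equation $\operatorname{div}a(x,Dv)=\operatorname{div}a(x,D\psi_i)$-type problem, i.e.\ the problem without obstacles, with $v=w$ on the boundary. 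The error is controlled by $G(|D\psi_1|)+G(|D\psi_2|)$: on the contact sets $w=\psi_i$ one has $Dw=D\psi_i$, and the variational inequality gives the correct sign of the remainder term. This step is the main obstacle. With two obstacles the contact sets are disjoint but both active, and the nonzero boundary data prevents the clean truncation available when $h=0$. I would first try the test function $v=w+\min\{\max\{\varphi,\psi_1-w\},\psi_2-w\}$ with $\varphi=\pm(w-v)$ suitably cut, and split the domain into $\{\psi_1<w<\psi_2\}$ and the two contact sets.
\item Let $\bar v$ solve the problem with frozen coefficients $\overline{a}_{B_{4r}}$, posed on the flattened domain $B_{4r}^+$ with zero data on the flat part. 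The difference $D(v-\bar v)$ is bounded by the $\delta$ oscillation and by higher integrability (Gehring for Orlicz growth) of $Dv$, using the Reifenberg flatness. The Lipschitz bound for $\bar v$ up to the flat boundary (cf.\ \cite{bc,cho1}) then yields $\|G(|D\bar v|)\|_{L^\infty(B_{2r})}\ls N_1$.
\end{enumerate}
Combining the three steps gives $\fint_{B_r}G(|Du-D\bar v|)\ls \eta(\varepsilon,\delta)\to0$.

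\textbf{Step 3: covering and weights.} By Step 2 and a Vitali-type covering lemma applied with respect to the doubling measure $\omega\,dx$ (an $A_\Phi$ weight is in some $A_q$, hence doubling and satisfying $\omega(E)/\omega(B)\ls c\,(|E|/|B|)^{\tau}$), we obtain the decay estimate
\[
\omega\bigl(\{\mathcal M(G(|Du|))>N_1\lambda\}\bigr)\ls \epsilon_1\,\omega\bigl(\{\mathcal M(G(|Du|))>\lambda\}\bigr)+\omega\bigl(\{\mathcal M(G(|\tilde F|))>\varepsilon\lambda\}\bigr),
\]
valid after scaling by $\lambda$, since the problem is invariant under $G(t)\mapsto G(t)/\lambda$-type normalisation. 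Here $\epsilon_1=c\,\eta^{\tau}$. Iterating over $\lambda=N_1^k\lambda_0$ and summing against $\Phi$ using $\Phi\in\Delta_2$, we choose $\epsilon_1$ small relative to the $\Delta_2$ constant to absorb the first term. Finally, the weighted maximal inequality $\int\Phi(\mathcal M f)\omega\ls c\int\Phi(f)\omega$, valid for $\omega\in A_\Phi$ and $\Phi\in\nabla_2$, applied to $f=G(|\tilde F|)$ yields \eqref{uff}. The initial level $\lambda_0$ is fixed through the energy bound of Step 1, and the resulting lower-order term is reabsorbed via the weighted Jensen/Hölder argument standard in \cite{bcp,cho}.
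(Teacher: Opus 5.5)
Your Step 3 matches the paper's covering argument: an unweighted density lemma, transfer to $\omega$ via Lemma \ref{weight2}, the weighted Vitali lemma, iteration, summation against $\Phi(C^k)$, the weighted maximal inequality, and a final normalisation. The problem is in Step 2, which does not yet yield the comparison estimates the covering argument needs. The first gap is the boundary datum. In 2(3) you compare with a function vanishing on the flat part $T_{4r}$. But your $w$ and $v$ carry the trace of $u$, which equals $h\neq0$ on $\partial\Omega\cap B_{4r}$, so $v-\bar v$ cannot be made small near the boundary. Putting $Dh$ into $\tilde F$ does not remove this trace. The natural subtraction fails too: $u-h$ solves a double obstacle problem for $a(x,\xi+Dh(x))$, which violates \eqref{u1} (its value at $\xi=0$ is $a(x,Dh(x))\neq0$), and its $x$-dependence through $Dh$ is not of small BMO type. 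The paper's device (Lemma \ref{uw1}) is to compare $u$ with $w_1$, which solves the double obstacle problem for the same $a$ and the same datum $b(x,F)$, but with obstacles $\psi_i-h$ and trace $u-h$ on $\partial\Omega_5$. Then $u-h$ is admissible for $w_1$, and $w_1+h$ (extended by $u$) is admissible for $u$. Adding the two inequalities gives $\int_{\Omega_5}[a(x,Du)-a(x,Dw_1)]\cdot(Du-Dw_1)\,dx\ls\int_{\Omega_5}[a(x,Du)-a(x,Dw_1)]\cdot Dh\,dx$, which is small by Young's inequality, and now $w_1=0$ on $\partial\Omega\cap B_5$.

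The second gap is removing both obstacles at once, which you yourself flag as unresolved. A solution of an unconstrained equation has no reason to lie between $\psi_1$ and $\psi_2$, so it is not a competitor in the double obstacle inequality. Truncating it as you propose leaves integrals over $\{v>\psi_2\}\cup\{v<\psi_1\}$ containing $Dv$, which the smallness of the data does not control. The heuristic ``$Dw=D\psi_i$ on the contact sets'' gives no inequality. The missing idea is to remove the obstacles one at a time, replacing each by its image under the operator as a source term and using the comparison principle to recover admissibility.

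In Lemma \ref{w1w2l}, $w_2$ solves the lower obstacle problem $w_2\rs\psi_1-h$ with source $a(x,D\psi_2-Dh)$ and trace $w_1$. Since $\psi_2-h$ solves the corresponding equation and dominates both the obstacle and the trace, testing with $\min\{w_2,\psi_2-h\}$ gives $w_2\ls\psi_2-h$. Hence $w_1$ and $w_2$ are mutually admissible, and $\fint|V(Dw_1)-V(Dw_2)|^2$ is made small using $\fint[G(|F|)+G(|D\psi_2|)+G(|Dh|)]\ls\delta$.

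In Lemma \ref{w2w3l}, $w_3$ solves $-\operatorname{div}a(x,Dw_3)=-\operatorname{div}a(x,D\psi_1-Dh)$ with trace $w_2$, and the comparison principle gives $w_3\rs\psi_1-h$. So $w_3$ is admissible for the problem of $w_2$, and the same subtraction controls $\fint|V(Dw_2)-V(Dw_3)|^2$. Only after these steps does the frozen-coefficient, zero-trace comparison of Lemmas \ref{w3w4l}--\ref{w3w4b} (your 2(3)) apply.
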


\section{   Preliminaries} 
\hskip\parindent In this section, we introduce several definitions and preliminary results. We begin by defining some notations for convenience. Let $x \in \mathbb{R}^n$ and $r > 0$.
\begin{itemize}
	\item $\Omega_r(x) := \Omega \cap B_r(x) \ \ \ \&  \ \ \ \Omega_r := \Omega \cap B_r \ \ \ \& \ \ \ B_r := B_r(0).$ 
	\item $B_r^+(x) := \{y \in B_r(x) : y_n > 0\} \ \ \   \& \ \ \ B_r^+ := B_r^+(0)  \ \ \  \& \ \ \ T_r := \{x \in B_r : x_n = 0\}.$ 
\end{itemize}

\begin{definition}
	A function $B :[0,+\infty)\rightarrow[0,+\infty)$ is called a Young function if it is convex and $B(0)=0$.
\end{definition}
\begin{definition}
	Assume that B is a Young function,  the Orlicz class $K^{B}(\Omega)$ is the set of all measurable functions $f : \Omega\rightarrow\mathbb{R}$ satisfying
	\begin{equation*}
		\int_\Omega B(|f|) \operatorname{d}\!\xi < \infty.\nonumber
	\end{equation*}
	The Orlicz space $L^{B}(\Omega)$ is the linear hull  of the Orlicz class  $K^{B}(\Omega)$ with the Luxemburg norm
	\begin{equation*}
		\Vert f \Vert_{L^B(\Omega)}:=\inf\left\lbrace \alpha>0: \ \ \int_{\Omega}B\left(\frac{|f|}{\alpha} \right) \operatorname{d}\!\xi \leqslant1\right\rbrace .
	\end{equation*}
	Furthermore, the Orlicz-Sobolev space $W^{1,B}(\Omega)$ is defined as
	\begin{equation*}
		W^{1,B}(\Omega)=\left\lbrace  f\in L^{B}(\Omega)\cap W^{1,1}(\Omega) \ \vert \ Df\in L^{B}(\Omega)\right\rbrace.\nonumber
	\end{equation*}
	Here, $D$ stands for gradient. The space $W^{1,B}(\Omega)$, equipped with the norm
	$\Vert f \Vert_{W^{1,B}(\Omega)}:=\Vert f \Vert_{L^B(\Omega)}+\Vert Df \Vert_{L^B(\Omega)},$ is a Banach space. Clearly, $W^{1,B}(\Omega)=W^{1,p}(\Omega)$, the standard Sobolev space, if $B(t)=t^p$ with $p\geqslant1$.
\end{definition}

The subspace $W_{0}^{1,B}(\Omega)$ is the closure of $C_{0}^{\infty}(\Omega)$ in $W^{1,B}(\Omega)$. The above properties about Orlicz space can be found in \cite{hphp1,rr28}.

 \begin{definition}
 	A Young function $B$ is called an $N$-function if
 	$$0<B(t)<+\infty \ \ for \ t>0$$
 	and
 	\begin{equation*}\label{nhanshu}
 		\lim_{t\rightarrow+\infty}\frac{B(t)}{t}=\lim_{t\rightarrow0}\frac{t}{B(t)}=+\infty.
 	\end{equation*}
 We observe that $ G(t)$, as defined in \eqref{g}, is an 
 $N$-function.

 \end{definition}
 
 \begin{definition}
 	A Young function B is said to satisfy the global $\vartriangle_2$ condition, denoted by $B\in\vartriangle_2$, if there exists a positive constant C such that for every $t>0$,
 	\begin{equation*}
 		B(2t)\leq CB(t).
 	\end{equation*}
 	Similarly, a Young function B is said to satisfy the global $\bigtriangledown_2$ condition, denoted by $B\in\bigtriangledown_2$, if there exists a  constant $\theta >1$ such that for every $t>0$,
 	\begin{equation*}
 		B(t)\leq \frac{B(\theta t)}{2\theta}.
 	\end{equation*}
 \end{definition}
 	The Young conjugate  of a Young function B will be denoted by $B^{\ast}$ and defined as
 $$B^{\ast}(t)=\sup_{s\geq 0}\left\lbrace st-B(s)\right\rbrace  \ \ for \ t\geq 0.$$
 We next recall Young's inequality for $N$-functions, which will be used throughout the paper.
 \begin{lemma}\cite{yz1}\label{gyoung}
 If $B\in \bigtriangleup_{2}\cap \bigtriangledown_{2}$ is an
 	$N$-function, then, for every $\varepsilon>0$, there exists a constant
 	$c(\varepsilon)>0$ such that
 	\[
 	st\leq \varepsilon B^{*}(s)+c(\varepsilon)B(t)
 	\qquad \text{for all } s,t\geq0.
 	\]
 \end{lemma}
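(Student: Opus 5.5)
The statement to prove is Lemma~\ref{gyoung}: for an $N$-function $B\in\vartriangle_2\cap\bigtriangledown_2$ and every $\varepsilon>0$ there is $c(\varepsilon)$ with $st\le \varepsilon B^*(s)+c(\varepsilon)B(t)$ for all $s,t\ge0$. The plan is to start from the classical Young inequality $st\le B(t)+B^*(s)$, which follows directly from the definition $B^*(s)=\sup_{\tau\ge0}\{s\tau-B(\tau)\}$ with $\tau=t$, and then to rescale so that the weight $\varepsilon$ lands on the conjugate term.

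Fix $\varepsilon\in(0,1]$; for $\varepsilon>1$ the constant for $\varepsilon=1$ already works. For $\lambda\ge1$, write $st=(s/\lambda)(\lambda t)$ and apply Young's inequality:
\[
st\le B^*(s/\lambda)+B(\lambda t).
\]
Since $B^*$ is convex with $B^*(0)=0$, we have $B^*(s/\lambda)\le \lambda^{-1}B^*(s)$. Choosing $\lambda=1/\varepsilon$ gives the term $\varepsilon B^*(s)$.

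It remains to control $B(\lambda t)$ by a multiple of $B(t)$. This is exactly where $\vartriangle_2$ enters. Pick $k\in\mathbb N$ with $2^k\ge\lambda$. Since $B$ is increasing, iterating $B(2\tau)\le CB(\tau)$ gives
\[
B(\lambda t)\le B(2^k t)\le C^k B(t).
\]
Hence $c(\varepsilon)=C^{k}$ with $k=\lceil\log_2(1/\varepsilon)\rceil$ works. Note that only $B\in\vartriangle_2$ is needed in this direction; the $\bigtriangledown_2$ assumption guarantees that $B^*\in\vartriangle_2$ and would be used for the symmetric version with $\varepsilon$ on $B(t)$. There is no genuine obstacle here; the only point to check is the monotonicity of $B$, which holds because a convex function with $B(0)=0$ and $B>0$ on $(0,\infty)$ is nondecreasing.
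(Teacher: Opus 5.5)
Your proof is correct and complete. The paper gives no proof of this lemma; it only quotes it from \cite{yz1}. Your argument is the standard one:
\begin{itemize}
\item apply $st\le B(t)+B^*(s)$ to the factorization $(\varepsilon s)(t/\varepsilon)$;
\item use convexity of $B^*$ and $B^*(0)=0$ to get $B^*(\varepsilon s)\le \varepsilon B^*(s)$;
\item iterate $\Delta_2$, together with monotonicity of $B$, to get $B(t/\varepsilon)\le C^{k}B(t)$ with $2^k\ge 1/\varepsilon$.
\end{itemize}
Every step checks out, including the reduction of $\varepsilon>1$ to $\varepsilon=1$ and the monotonicity argument for $B$.

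Your side remark that only $\Delta_2$ is used in this direction is also accurate, and it matters for how the paper applies the lemma. In Lemmas \ref{uw1}--\ref{w2w3l} and in the energy estimate inside the proof of Theorem \ref{weight}, the paper also uses the reversed form, with the small constant on $B(t)$. An example is the bound $c\varepsilon_2\fint_{\Omega_5}G(|Dw_1|)\,dx+c(\varepsilon_2)\fint_{\Omega_5}G^*(g(|F|))\,dx$.

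That reversed version is where $\nabla_2$ is genuinely needed. The same rescaling, applied to $(\lambda s)(t/\lambda)$ with $\lambda=1/\varepsilon$, gives $st\le B^*(\lambda s)+\varepsilon B(t)$. Bounding $B^*(\lambda s)\le c(\lambda)B^*(s)$ then requires $B^*\in\Delta_2$, which is equivalent to $B\in\nabla_2$. For $B=G$ this holds because $i_g>0$ gives $tg(t)/G(t)\ge 1+i_g>1$. So your proof, run once for $B$ and once with the roles reversed, covers every use of the lemma in the paper.
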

 Note that $G(t)$, as defined in \eqref{g}, satisfies   Young's inequality.
 
We   recall the following standard estimate for the Young conjugate; see \cite{a1}:
 \begin{equation}\label{a(x)4}
 	B^{*}\left( \frac{B(t)}{t}\right) \leqslant B(t).
 \end{equation}

Define an auxiliary vector field $V : \mathbb{R}^n \to \mathbb{R}^n$ given by
\begin{align*}
	V(\xi) := \left( \frac{g(|\xi|)}{|\xi|} \right)^{1/2} \xi, \quad \text{for each } \xi \in \mathbb{R}^n.
\end{align*}

We recall the following standard relations among $a$, $V$, and $G$, see Lemma 3 in \cite{de1}.
\begin{lemma}\label{vg}
Let $a$ and $G$ satisfy \eqref{u1} and \eqref{u3}, respectively. Then
\begin{align*}
	[a(x, \xi) - a(x, \eta)] \cdot (\xi - \eta)  \approx |V(\xi) - V(\eta)|^2, 
	\end{align*}
	\begin{align*}
		a(x, \xi) \cdot \xi \approx |V(\xi)|^2 \approx G(|\xi|)
	\end{align*}
 and
 \begin{align}\label{cv}
G(|\xi-\eta|)\leqslant c_v \left[ |V(\xi) - V(\eta)|^2+G(\eta)\right] 
	\end{align}
uniformly in $x, \xi, \eta \in \mathbb{R}^n$. 
 Here $c_v>1$ and the hidden constants in the above relations depend only on $n,l,L,i_g,s_g$.
\end{lemma}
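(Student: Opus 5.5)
We prove the three relations in turn. Everything rests on two elementary consequences of \eqref{u3}. By integrating $i_g\le tg'(t)/g(t)\le s_g$, for $0<s\le t$ we get
\[
\Big(\tfrac{s}{t}\Big)^{s_g}\le \frac{g(s)}{g(t)}\le \Big(\tfrac{s}{t}\Big)^{i_g}.
\]
Hence $t\mapsto g(t)/t$ is almost monotone, with exponents $i_g-1>-1$ and $s_g-1$. Moreover
\[
G(t)\le tg(t)\le (1+s_g)G(t),
\]
which follows from $g(\tau)\ge (\tau/t)^{s_g}g(t)$ for $\tau\le t$ together with the monotonicity of $g$. In particular $|V(\xi)|^2=g(|\xi|)|\xi|\approx G(|\xi|)$, and $G\in\Delta_2$.

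\textbf{Step 1 (monotonicity).} Set $\xi_\tau=\eta+\tau(\xi-\eta)$. Then
\[
a(x,\xi)-a(x,\eta)=\int_0^1 D_\eta a(x,\xi_\tau)\,d\tau\,(\xi-\eta).
\]
By \eqref{u1}, $[a(x,\xi)-a(x,\eta)]\cdot(\xi-\eta)$ lies between $l\,I\,|\xi-\eta|^2$ and $L\,I\,|\xi-\eta|^2$, where
\[
I=\int_0^1\frac{g(|\xi_\tau|)}{|\xi_\tau|}\,d\tau .
\]
The key technical claim is
\[
I\approx \frac{g(|\xi|+|\eta|)}{|\xi|+|\eta|}.
\]
Since $|\xi_\tau|\le|\xi|+|\eta|$, the almost-decreasing power $(i_g-1)$ bound gives the lower estimate $\ge$, after restricting to the set of $\tau$ (of measure at least $1/4$) where $|\xi_\tau|\gtrsim|\xi|+|\eta|$. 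For the upper bound we estimate $g(|\xi_\tau|)/|\xi_\tau|\lesssim (|\xi_\tau|/(|\xi|+|\eta|))^{i_g-1}\,g(|\xi|+|\eta|)/(|\xi|+|\eta|)$ when $i_g<1$. We then use $\int_0^1|\xi_\tau|^{i_g-1}d\tau\lesssim(|\xi|+|\eta|)^{i_g-1}$, which holds because $i_g-1>-1$ and $\xi_\tau$ is affine in $\tau$. This integrability is where $i_g>0$ is essential, and I expect it to be the main technical point.

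It then remains to show that
\[
|V(\xi)-V(\eta)|^2\approx \frac{g(|\xi|+|\eta|)}{|\xi|+|\eta|}\,|\xi-\eta|^2 .
\]
For the upper bound, apply the same integral representation to $V$. We have $|DV(\zeta)|\lesssim (g(|\zeta|)/|\zeta|)^{1/2}$, and the exponent $(i_g-1)/2>-1$ is again integrable along the segment. For the lower bound, assume without loss of generality $|\xi|\ge|\eta|$. If $|\eta|\le|\xi|/2$, then since $t\mapsto g(t)t$ grows at least like $t^{1+i_g}$, we get $|V(\xi)|-|V(\eta)|\ge c|V(\xi)|$. Also $|\xi-\eta|\approx|\xi|\approx|\xi|+|\eta|$, which gives the claim. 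If $|\eta|>|\xi|/2$, split $\xi-\eta$ into its radial and angular parts. For the angular part, $V(\zeta)=\rho(|\zeta|)\zeta/|\zeta|$ with $\rho(t)=(g(t)t)^{1/2}$ increasing, so $|V(\xi)-V(\eta)|^2\ge (\rho(|\xi|)-\rho(|\eta|))^2+\rho(|\xi|)\rho(|\eta|)\,|\hat\xi-\hat\eta|^2$. Here $\rho'(t)\approx\rho(t)/t$ and $\rho(|\xi|)\approx\rho(|\eta|)$, and this gives the comparable lower bound.

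\textbf{Step 2 (coercivity).} The growth bound in \eqref{u1} forces $a(x,0)=0$. Taking $\eta=0$ in Step 1 gives $a(x,\xi)\cdot\xi\approx g(|\xi|)|\xi|=|V(\xi)|^2\approx G(|\xi|)$.

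\textbf{Step 3 (\eqref{cv}).} Set $t=|\xi-\eta|$; here $G(\eta)$ means $G(|\eta|)$. If $t\le|\eta|$, then $G(t)\le G(|\eta|)$ by monotonicity. If $t>|\eta|$, then $|\xi|+|\eta|\le t+2|\eta|\le 3t$ and $|\xi|+|\eta|\ge t$. By Step 1 and the almost monotonicity of $g(s)/s$,
\[
|V(\xi)-V(\eta)|^2\approx \frac{g(|\xi|+|\eta|)}{|\xi|+|\eta|}\,t^2\gtrsim g(t)\,t\ge G(t).
\]
Combining the two cases yields \eqref{cv} with $c_v=c_v(n,l,L,i_g,s_g)>1$. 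All constants in Steps 1–3 depend only on $n,l,L,i_g,s_g$, as claimed.
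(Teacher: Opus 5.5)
Your proof is correct and complete. The paper does not prove this lemma itself; it quotes it from Lemma 3 of \cite{de1}. The standard argument behind that reference is organized around the shifted $N$-function $G_a(t)\approx \frac{g(a+t)}{a+t}\,t^2$. One shows $|V(\xi)-V(\eta)|^2\approx G_{|\eta|}(|\xi-\eta|)$ and then uses $G(t)\lesssim G_a(t)+G(a)$.

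Your proof is a self-contained version of exactly this. Since $|\eta|+|\xi-\eta|\approx|\xi|+|\eta|$, your intermediate quantity $\frac{g(|\xi|+|\eta|)}{|\xi|+|\eta|}|\xi-\eta|^2$ equals $G_{|\eta|}(|\xi-\eta|)$ up to constants. Your case split $t\le|\eta|$ versus $t>|\eta|$ in Step 3 is precisely the proof of $G(t)\lesssim G_a(t)+G(a)$.

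The citation buys brevity. Your version buys transparency in three places:
\begin{itemize}
\item Step 1 shows how the structure conditions \eqref{u1} on a general $a(x,\cdot)$, and not only the model field $\frac{g(|\xi|)}{|\xi|}\xi$, enter through the segment integral $I$.
\item It isolates where $i_g>0$ is used: integrability of $|\xi_\tau|^{i_g-1}$, which also justifies the fundamental theorem of calculus along segments through the origin.
\item It shows the constants depend only on $l,L,i_g,s_g$, not on $n$.
\end{itemize}

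One small slip in wording, which does not affect the argument. For the lower bound on $I$, the relevant one-sided estimate is the $s_g$-bound $g(s)/s\ge (s/T)^{s_g-1}g(T)/T$ for $s\le T:=|\xi|+|\eta|$. The $i_g$-bound only controls $g(s)/s$ from above when $s\le T$. With $|\xi|\ge|\eta|$ and $\tau\in[3/4,1]$, one has $|\xi_\tau|\ge(2\tau-1)|\xi|\ge T/4$. This gives $I\ge 4^{-s_g}\,g(T)/T$, so your conclusion stands.
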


\begin{df}
We say that $\omega$ is a weight if $\omega$ is a locally integrable positive function in $\R^n$ and write $\omega(U)$ for Lebesgue measure of any measurable set $U \subset \R^n$, given by
$$\omega(U):=\int_{U}\omega(x)dx.$$
\end{df}

\begin{df}
For $1<q<\infty$, the Muckenhoupt class $A_q$ consists of all
weights $\omega$ satisfying
\begin{align}
	[\omega]_{A_q}
	:=
	\sup_{B_r(y)\subset\mathbb{R}^n}
	\left(\fint_{B_r(y)}\omega(x)\,dx\right)
	\left(
	\fint_{B_r(y)}
	\omega(x)^{-\frac{1}{q-1}}\,dx
	\right)^{q-1}
	<\infty.
\end{align}
The quantity $[\omega]_{A_q}$ is called the $A_q$ characteristic
of $\omega$.

For an N-function $\Phi$, a weight $\omega$ belongs to an $A_{\Phi}$ class if, for each $\rho>0$ and all balls $B_r(y)$,
$$\left( \fint_{B_r(y)}\rho \omega(x)dx \right) \Phi'\left( \fint_{B_r(y)}(\Phi')^{-1}\left( \frac{1}{\rho\omega(x)}\right)dx \right) \ls c_{\Phi},$$
where $c_{\Phi} \in (0,\infty)$ is independent of $\rho$ and $B_r(y)$. The weighted Orlicz space $L^{\Phi}_{\omega}(\Omega)$ related to $A_{\Phi}$ consists of all measurable functions $f: \Omega \rightarrow \R$ such that
$$\int_{\Omega} \Phi(|f|)\omega(x)dx<\infty.$$
We then define the Luxemburg norm of the weighted Orlicz space by
$$||f||_{L^{\Phi}_{\omega}(\Omega)}=\inf \left\lbrace M>0: \int_{\Omega}\Phi \left( \frac{|f(x)|}{M}\right)\omega(x)dx \ls 1 \right\rbrace. $$
\end{df}

Next define lower and upper indices of an N-function $\Phi$. Let
$$h_{\Phi}(M):=\sup_{t>0}\frac{\Phi(Mt)}{\Phi(t)}, \ \ \ \forall\, M>0.$$
The lower index $i(\Phi)$ and the upper index $I(\Phi)$ are defined, respectively, by
$$i(\Phi):=\sup_{0<M<1}\frac{\log h_{\Phi}(M)}{\log M}\quad \&\quad
I(\Phi):=\inf_{1<M<\infty}\frac{\log h_{\Phi}(M)}{\log M}.$$
  Clearly, $1\leq i(\Phi) \leq I(\Phi)$.

Then we recall two lemmas from \cite[Chapter 7]{g1}.
\begin{lemma}\label{weight1}
The class \( A_q \) is monotone increasing with respect to the index \( q \); that is,
\[
A_{q_1} \subset A_{q_2} \quad \text{whenever } q_1 \leq q_2.
\]
Furthermore, it satisfies the following open-end property: for any \( 1 < q < \infty \),
\[
A_q = \bigcup_{s \in (1,q)} A_s.
\]
\end{lemma}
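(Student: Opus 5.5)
The plan is to prove the two assertions separately. Monotonicity follows from an elementary power-mean inequality. The open-end property follows from a reverse H\"older inequality applied to the dual weight $\sigma:=\omega^{-\frac{1}{q-1}}$. The inclusion $\bigcup_{s\in(1,q)}A_s\subset A_q$ is just monotonicity, so only the reverse inclusion needs real work.

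\emph{Monotonicity.} Let $1<q_1\le q_2$ and put $r_i:=\frac{1}{q_i-1}$, so that $r_2\le r_1$. On a fixed ball $B=B_r(y)$, H\"older's (or Jensen's) inequality with exponent $r_1/r_2\ge 1$ gives the power-mean bound
\[
\left(\fint_B \omega^{-r_2}\,dx\right)^{1/r_2}\le \left(\fint_B \omega^{-r_1}\,dx\right)^{1/r_1}.
\]
Since $q_i-1=1/r_i$, this reads
\[
\left(\fint_B\omega^{-\frac{1}{q_2-1}}\,dx\right)^{q_2-1}\le \left(\fint_B\omega^{-\frac{1}{q_1-1}}\,dx\right)^{q_1-1}.
\]
Multiplying by $\fint_B\omega\,dx$ and taking the supremum over balls yields $[\omega]_{A_{q_2}}\le[\omega]_{A_{q_1}}$, hence $A_{q_1}\subset A_{q_2}$.

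\emph{Open-end property.} Let $\omega\in A_q$. First, $\sigma$ belongs to $A_{q'}$ with $[\sigma]_{A_{q'}}=[\omega]_{A_q}^{\frac{1}{q-1}}$. This follows directly from the definition, since $\sigma^{-\frac{1}{q'-1}}=\omega$.

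The key step is a reverse H\"older inequality for $A_p$ weights: there exist $\varepsilon>0$ and $C\ge1$, depending only on $n,q,[\omega]_{A_q}$, such that for every ball $B$
\[
\left(\fint_B \sigma^{1+\varepsilon}\,dx\right)^{\frac{1}{1+\varepsilon}}\le C\fint_B\sigma\,dx .
\]
Granting this, define $s$ by $s-1:=\frac{q-1}{1+\varepsilon}$, so that $1<s<q$ and $\omega^{-\frac{1}{s-1}}=\sigma^{1+\varepsilon}$. Then
\[
\left(\fint_B\omega\right)\left(\fint_B\omega^{-\frac{1}{s-1}}\right)^{s-1}
=\left(\fint_B\omega\right)\left(\fint_B\sigma^{1+\varepsilon}\right)^{\frac{q-1}{1+\varepsilon}}
\le C^{q-1}\left(\fint_B\omega\right)\left(\fint_B\sigma\right)^{q-1}\le C^{q-1}[\omega]_{A_q}.
\]
Hence $\omega\in A_s$.

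\emph{Main obstacle: proving the reverse H\"older inequality.} I would prove it for a general weight $v\in A_p$ (applied with $v=\sigma$, $p=q'$) in two steps.

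First, derive the $A_\infty$-type property from the $A_p$ condition via H\"older's inequality. For $E\subset B$ one has $\left(\frac{|E|}{|B|}\right)^p\le [v]_{A_p}\frac{v(E)}{v(B)}$. Taking $E=\{x\in B: v(x)\le \beta\,\fint_B v\,dx\}$ for a small $\beta$ then shows that $v$ cannot be small on a large portion of the cube.

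Second, run a Calder\'on--Zygmund decomposition on a cube $Q_0$ at levels $\lambda_k=\alpha^k\fint_{Q_0}v\,dx$, with $\alpha$ large. Using the $A_\infty$ property on each stopping cube, show that the union $\Omega_{k+1}$ of the level-$(k+1)$ cubes satisfies $|\Omega_{k+1}|\le \gamma|\Omega_k|$ for some fixed $\gamma<1$. Summing $\int_{Q_0} v^{1+\varepsilon}\le \sum_k \lambda_{k+1}^{\varepsilon}\,v(\Omega_k)$ then converges once $\alpha^{\varepsilon}\gamma<1$.

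Finally, pass from cubes to balls by comparing each ball with concentric cubes, at the cost of a dimensional constant. This bookkeeping is where most of the technical care lies. Alternatively, one can simply cite the standard reverse H\"older inequality from \cite[Chapter 7]{g1}.
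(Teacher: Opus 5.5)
Your overall argument is correct and is the standard proof behind the citation \cite[Chapter 7]{g1}; the paper gives no proof of its own. The power-mean argument gives $[\omega]_{A_{q_2}}\le[\omega]_{A_{q_1}}$. The open-end property follows exactly as you say from a reverse H\"older inequality for $\sigma=\omega^{-1/(q-1)}\in A_{q'}$ with $s-1=(q-1)/(1+\varepsilon)$, which gives $[\omega]_{A_s}\le C^{q-1}[\omega]_{A_q}$. If you simply cite the reverse H\"older inequality, the proof is complete.

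Your self-contained sketch of the reverse H\"older inequality, however, has a slip at its key step that breaks the summation as written.

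\emph{Lebesgue decay cannot close the sum.} The decay $|Q_{k,j}\cap\Omega_{k+1}|\le 2^{n}\alpha^{-1}|Q_{k,j}|$ follows from the Calder\'on--Zygmund stopping rule alone and uses nothing about $v$. From $|\Omega_k|\le\gamma^k|Q_0|$ with $\gamma=2^n/\alpha$ you only get $v(\Omega_k)\le 2^n\lambda_k|\Omega_k|$. The ratio of consecutive terms in $\sum_k\lambda_{k+1}^{\varepsilon}v(\Omega_k)$ is then $\alpha^{1+\varepsilon}\gamma=2^n\alpha^{\varepsilon}>1$. This is unavoidable: otherwise every locally integrable weight would satisfy a reverse H\"older inequality.

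\emph{What you need instead.} Your criterion $\alpha^{\varepsilon}\gamma<1$ requires decay in $v$-measure, $v(\Omega_{k+1})\le\gamma\,v(\Omega_k)$, and this is where the $A_p$ condition must enter. It comes from the inequality $(|E|/|Q|)^p\le[v]_{A_p}\,v(E)/v(Q)$ you already derived (with $[v]_{A_p}$ computed over cubes, which costs only a dimensional factor). Apply it to the complement rather than to a sublevel set: if $E\subset Q$ and $|E|\le\eta|Q|$, applying it to $Q\setminus E$ gives $v(E)\le\bigl(1-(1-\eta)^p[v]_{A_p}^{-1}\bigr)v(Q)$. Take $E=Q_{k,j}\cap\Omega_{k+1}$ and fix $\alpha=2^{n+1}$, so $\eta=\frac12$. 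Summing over $j$ yields $v(\Omega_{k+1})\le\gamma\,v(\Omega_k)$ with $\gamma=1-2^{-p}[v]_{A_p}^{-1}<1$. Only then choose $\varepsilon$ so small that $\alpha^{\varepsilon}\gamma<1$.

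The sublevel-set bound for $\{v\le\beta\fint_B v\}$ that you wrote down belongs to a different, Gehring-type proof. That proof bounds $v$ of the Calder\'on--Zygmund cubes at height $\lambda$ by a multiple of $\lambda\,|\{v>\beta\lambda\}|$ and integrates in $\lambda$. It is not the ingredient for the stopping-time iteration you describe.
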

\begin{lemma}\label{weight2}
Suppose \( \omega \in A_q \) for some \( 1 < q < \infty \),  let \( B \subset \mathbb{R}^n \) be a ball and let \( L \subset B \) be a measurable subset. Then there exist constants \( \nu, d > 0 \), depending only on \([ \omega ]_q\) and \( n \), such that
\[
\frac{1}{d} \left( \frac{|L|}{|B|} \right)^q \leq \frac{\omega(L)}{\omega(B)} \leq d \left( \frac{|L|}{|B|} \right)^{\nu}.
\]
\end{lemma}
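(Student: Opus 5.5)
The lower bound follows from H\"older's inequality together with the definition of $[\omega]_{A_q}$. The upper bound follows from a reverse H\"older inequality for $A_q$ weights, which is the one genuinely nontrivial ingredient.

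\textbf{Lower bound.} Write $|L|=\int_L \omega^{1/q}\omega^{-1/q}\,dx$ and apply H\"older's inequality with exponents $q$ and $q/(q-1)$:
\[
|L|\ls \omega(L)^{1/q}\Big(\int_B \omega^{-\frac{1}{q-1}}\,dx\Big)^{\frac{q-1}{q}}.
\]
Raise this to the power $q$, divide by $|B|^q$, and multiply and divide by $\omega(B)$. This gives
\[
\Big(\frac{|L|}{|B|}\Big)^q\ls \frac{\omega(L)}{\omega(B)}\Big(\fint_B\omega\,dx\Big)\Big(\fint_B\omega^{-\frac{1}{q-1}}\,dx\Big)^{q-1}\ls [\omega]_{A_q}\frac{\omega(L)}{\omega(B)} .
\]
Hence the left inequality holds with any $d\rs[\omega]_{A_q}$.

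\textbf{Upper bound.} The plan is first to establish the reverse H\"older inequality: there are $\varepsilon>0$ and $C\rs1$, depending only on $n$ and $[\omega]_{A_q}$, such that
\[
\Big(\fint_B\omega^{1+\varepsilon}\,dx\Big)^{\frac{1}{1+\varepsilon}}\ls C\fint_B\omega\,dx
\]
for every ball $B$. Granting this, apply H\"older's inequality with exponents $1+\varepsilon$ and $(1+\varepsilon)/\varepsilon$ on $L$:
\[
\omega(L)\ls |L|^{\frac{\varepsilon}{1+\varepsilon}}\Big(\int_B\omega^{1+\varepsilon}\,dx\Big)^{\frac{1}{1+\varepsilon}}\ls C|L|^{\frac{\varepsilon}{1+\varepsilon}}|B|^{\frac{1}{1+\varepsilon}}\frac{\omega(B)}{|B|}=C\Big(\frac{|L|}{|B|}\Big)^{\frac{\varepsilon}{1+\varepsilon}}\omega(B).
\]
This is the right inequality with $\nu=\varepsilon/(1+\varepsilon)$. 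Taking $d=\max\{[\omega]_{A_q},C\}$ handles both sides at once.

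\textbf{Main obstacle: the reverse H\"older inequality.} The route goes through an $A_\infty$-type property.
\begin{enumerate}
\item Apply the already proven lower bound to the complement $B\setminus E$ of a subset $E\subset B$. If $|E|\ls\frac12|B|$, this yields $\omega(B\setminus E)\rs 2^{-q}[\omega]_{A_q}^{-1}\omega(B)$. Equivalently, $\omega(E)\ls\beta\,\omega(B)$ with $\beta=1-2^{-q}[\omega]_{A_q}^{-1}<1$.
\item Run a Calder\'on--Zygmund stopping-time decomposition of $\omega$ on a cube $Q_0$ containing $B$. Use an increasing sequence of heights $\lambda_k=(2^n/\alpha)^k\fint_{Q_0}\omega$, with $\alpha$ chosen so that the maximal dyadic cubes at level $k+1$ occupy at most half of each level-$k$ cube.
\item By step 1, the $\omega$-measure of the level sets decays geometrically, $\omega(\{\omega>\lambda_k\})\ls\beta^k\omega(Q_0)$.
\item Summing $\int\omega^{1+\varepsilon}\ls\sum_k\lambda_{k+1}^{\varepsilon}\,\omega(\{\omega>\lambda_k\})$ converges when $\varepsilon$ is small enough that $(2^n/\alpha)^{\varepsilon}\beta<1$. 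This gives the reverse H\"older inequality on cubes, and it transfers to balls by enclosing each ball in a comparable cube and using the doubling property of $\omega$, which again follows from the lower bound.
\end{enumerate}
Alternatively, this step can simply be quoted from \cite[Chapter 7]{g1}, where the reverse H\"older inequality for $A_q$ weights is proved. All constants depend only on $n$ and $[\omega]_{A_q}$, as required.
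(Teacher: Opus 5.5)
Your proof is correct and follows the standard argument. The paper does not prove this lemma itself but quotes it from \cite[Chapter 7]{g1}, where the lower bound comes from H\"older's inequality and the $A_q$ condition exactly as you do, and the upper bound from the reverse H\"older inequality established by the Calder\'on--Zygmund stopping-time argument you sketch. The only point to make explicit is that your stopping-time step applies the property from step 1 to dyadic cubes rather than balls. This is harmless, since comparing each cube with its circumscribed ball bounds the cube version of the $A_q$ characteristic by $c(n,q)[\omega]_{A_q}$.
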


Next, we briefly recall the definition and basic properties of the Hardy-Littlewood maximal operator. For a locally integrable function $f$ on $\mathbb{R}^n$, the maximal function $\mathcal{M}f$ is defined by
\begin{align*}
	\mathcal{M}f(x)
	&= \sup_{r>0} \fint_{B_r} |f(y)|\,\mathrm{d}y,
	\qquad \text{for every } x\in\mathbb{R}^n.
\end{align*}
If $f$ is defined only on a bounded domain $U \subset \mathbb{R}^n$, we naturally identify it with its zero extension outside $U$ by setting
\begin{align*}
	\mathcal{M}f &= \mathcal{M}(f\chi_U),
\end{align*}
where $\chi_U$ denotes the characteristic function of $U$. It is a classical result that  $\mathcal{M}$ satisfies the following standard weak $(1,1)$ estimate:
\begin{align}\label{one}
	\left|\left\{ x\in\mathbb{R}^n : \mathcal{M}f(x) > K \right\}\right|
	&\leq \frac{c}{K} \int_{\mathbb{R}^n}|f(x)|\,\mathrm{d}x,
	\qquad \text{for every } K>0,
\end{align}
where $c = c(n) > 0$ is a universal constant.

According to \cite[Theorem 2.1.1]{kk}, we have the following lemma.
\begin{lemma}\label{weight3}
 Let $\Phi$ be an $N$-function satisfying the $\Delta_2$ and $\nabla_2$ conditions, and let $\omega$ be a weight on $\mathbb{R}^n$. Then the following statements are equivalent:
\begin{enumerate}
    \item[(i)] There exists a positive constant \( c_1 \) such that, for every \( f \in L^\Phi_\omega(\mathbb{R}^n) \),
    \[
    \int_{\mathbb{R}^n} \Phi(\mathcal{M}f)\,\omega(x)\,dx \leq c_1 \int_{\mathbb{R}^n} \Phi(|f|)\,\omega(x)\,dx.
    \]

    \item[(ii)] There exists a positive constant \( c_2 \) such that, for every \( \rho > 0 \) and \( f \in L^\Phi_{\rho\omega}(\mathbb{R}^n) \),
    \[
    \|\mathcal{M}f\|_{L^\Phi_{\rho\omega}(\mathbb{R}^n)} \leq c_2 \|f\|_{L^\Phi_{\rho\omega}(\mathbb{R}^n)}.
    \]

    \item[(iii)] \( \omega \in A_\Phi \).

    \item[(iv)] \( \omega \in A_{i(\Phi)} \).
\end{enumerate}
\end{lemma}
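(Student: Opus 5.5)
The plan is to prove the cycle (iv)$\Rightarrow$(i)$\Rightarrow$(ii)$\Rightarrow$(iii)$\Rightarrow$(iv). The first two implications are soft consequences of classical Muckenhoupt theory and of homogeneity in the weight. The last two use test functions adapted to $\Phi$ and to the index $i(\Phi)$.

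\emph{(iv)$\Rightarrow$(i).} Since $\Phi\in\nabla_2$, one has $i(\Phi)>1$, and $\Delta_2$ gives $I(\Phi)<\infty$. By the open-end property in Lemma \ref{weight1}, $\omega\in A_{i(\Phi)}$ lies in $A_{p_0}$ for some $1<p_0<i(\Phi)$. Muckenhoupt's theorem then gives that $\mathcal M$ is of weak type $(p_0,p_0)$ with respect to $\omega\,dx$. By the monotonicity in Lemma \ref{weight1}, $\omega\in A_{q_0}$ for any $q_0>I(\Phi)$, so $\mathcal M$ is bounded on $L^{q_0}_\omega$. Modular Marcinkiewicz interpolation then applies. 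For each $t>0$ split $f=f\chi_{\{|f|>t/2\}}+f\chi_{\{|f|\le t/2\}}$ and write
\[
\int\Phi(\mathcal Mf)\,\omega\,dx=\int_0^\infty \Phi'(t)\,\omega(\{\mathcal Mf>t\})\,dt .
\]
Estimate the first piece by the weak $(p_0,p_0)$ bound and the second by the strong $(q_0,q_0)$ bound, then apply Fubini. The resulting integrals $\int_0^{2|f|}\Phi'(t)t^{-p_0}\,dt$ and $\int_{2|f|}^\infty\Phi'(t)t^{-q_0}\,dt$ are controlled by $\Phi(|f|)|f|^{-p_0}$ and $\Phi(|f|)|f|^{-q_0}$ respectively, exactly because $p_0<i(\Phi)\le I(\Phi)<q_0$. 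This step is standard.

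\emph{(i)$\Rightarrow$(ii)$\Rightarrow$(iii).} Inequality (i) is linear in $\omega$, so it holds for $\rho\omega$ with the same constant $c_1\ge1$. Apply it to $f/\|f\|_{L^\Phi_{\rho\omega}}$ and use convexity, $\Phi(s/c_1)\le\Phi(s)/c_1$, to get (ii) with $c_2=c_1$. For (ii)$\Rightarrow$(iii), fix a ball $B$ and $\rho>0$, and test with $f=(\Phi')^{-1}(1/(\rho\omega))\chi_B$, truncated first to guarantee integrability and then passed to the limit. Put $\lambda=\fint_B f$. Then $\mathcal Mf\ge\lambda$ on $B$, and $\Phi(f)\le f\Phi'(f)$ gives $\int\Phi(f)\rho\omega\le\int_B f=|B|\lambda$. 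Because $\rho$ ranges over all of $(0,\infty)$, we may rescale $\rho$ to convert the norm inequality (ii) into a modular one, with constants depending only on $c_2$ and the $\Delta_2$ constant. This yields $\Phi(\lambda)\,\rho\omega(B)\lesssim|B|\lambda$. Finally $\Phi(\lambda)/\lambda\approx\Phi'(\lambda)$ by $\Delta_2$, which is precisely the $A_\Phi$ condition.

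\emph{(iii)$\Rightarrow$(iv).} I expect this step to be the main obstacle. My first attempt is to use (iii) to obtain a weak-type modular estimate $\Phi(t)\,\omega(\{\mathcal Mf>t\})\lesssim\int\Phi(c|f|)\omega$, via a Calder\'on--Zygmund/Vitali covering argument at level $t$ with the $A_\Phi$ inequality applied on each selected ball. I would then test this estimate with $f=\omega^{-1/(q-1)}\chi_B$-type functions, where the exponent $q$ is near $i(\Phi)$ and is dictated by the function $h_\Phi(M)$. Choosing $M\to0$ along a sequence realizing $\log h_\Phi(M)/\log M\to i(\Phi)$ should let the weak-type bound reproduce the $A_{q}$ quotient with $q=i(\Phi)$. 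To conclude $\omega\in A_{i(\Phi)}$ itself rather than only $A_q$ for $q>i(\Phi)$, I would use a reverse H\"older self-improvement of the resulting inequality. This endpoint issue is where I expect the real difficulty; if it resists, I would follow the Boyd-index argument in \cite[Chapter 2]{kk}.
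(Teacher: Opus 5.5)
You establish only three of the four implications. The paper itself gives no proof of this lemma; it quotes \cite[Theorem 2.1.1]{kk}.

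Your steps (iv)$\Rightarrow$(i)$\Rightarrow$(ii)$\Rightarrow$(iii) are fine. In (iv)$\Rightarrow$(i) the $L^{q_0}_\omega$ bound is not even needed: $\mathcal{M}(f\chi_{\{|f|\le t/2\}})\le t/2$, so only the weak $(p_0,p_0)$ bound and $p_0<i(\Phi)$ matter.

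The gap is (iii)$\Rightarrow$(iv). This is the implication with real content, and your sketch does not reach it.
\begin{itemize}
\item Passing through the weak-type modular estimate $\Phi(t)\,\omega(\{\mathcal{M}f>t\})\lesssim\int\Phi(c|f|)\omega$ gains nothing. Under $\Delta_2$ it is equivalent to (iii): testing it exactly as in your (ii)$\Rightarrow$(iii) gives back $A_\Phi$.
\item Testing with $f=s\chi_E$, $E\subset B$, and optimizing in $s$ gives only $\omega(E)/\omega(B)\gtrsim h_\Phi(|E|/(c|B|))\ge (|E|/(c|B|))^{i(\Phi)}$. This restricted-type condition is satisfied by $\omega(x)=|x|^{n(p-1)}$ with $p=i(\Phi)$, although this $\omega\notin A_p$.
\item Testing with $f=s\sigma\chi_B$, $\sigma=\omega^{-1/(p-1)}$, requires dominating $\Phi(\lambda U)/\Phi(U)$, where $U=s\fint_B\sigma$, by a multiple of $\lambda^{p}$ over the whole range of $\lambda=\sigma/\fint_B\sigma$. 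For $\lambda\le 1$ the index bound $h_\Phi(\lambda)\lesssim\lambda^{p-\eta}$ is harmless by Jensen. For $\lambda>1$ you only have $h_\Phi(\lambda)\lesssim\lambda^{I(\Phi)+\eta}$ in general, or $\lambda^{p+\eta}$ on a bounded range at a well-chosen scale. Either way you are left needing $\fint_B\sigma^{1+\kappa}\lesssim(\fint_B\sigma)^{1+\kappa}$ for some $\kappa>0$. That is a reverse H\"older inequality for $\sigma$, which you do not have at that stage.
\end{itemize}

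Your proposed endpoint fix is also invalid. Reverse H\"older self-improvement goes from $A_p$ to $A_{p-\varepsilon}$. It cannot take you from $\bigcap_{q>p}A_q$ to $A_p$, and these classes differ: $|x|^{n(p-1)}\in A_q$ for every $q>p$, but $|x|^{n(p-1)}\notin A_p$, because $\omega^{-1/(p-1)}=|x|^{-n}$ is not locally integrable.

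What is actually required is one of the following:
\begin{itemize}
\item a bound on $[\omega]_{A_q}$ that stays uniform as $q\downarrow i(\Phi)$, after which Fatou's lemma alone gives $\omega\in A_{i(\Phi)}$; or
\item an argument at the exact exponent that uses the whole family of $A_\Phi$ inequalities over all $\rho>0$, rather than a single test function.
\end{itemize}
Supplying this is precisely the substance of the cited theorem. As written your cycle is not closed, and deferring to \cite{kk} for this step is no more than what the paper does.
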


We next recall a classical characterization of weighted Orlicz spaces in terms of the weighted measures of superlevel sets, which will be essential in the subsequent iteration argument; see \cite{rr28}.

\begin{lemma}\label{weight4}
Let $U \subset \mathbb{R}^n$ be a bounded domain, and assume that $f$ is a measurable function compactly supported in $U$. Then, for any $\sigma > 0$, $K \geq 1$, and $N$-function $\Phi$ satisfying the $\Delta_2$ and $\nabla_2$ conditions, we have
\begin{align}
	f \in L_{\omega}^{\Phi}(U) \quad \Longleftrightarrow \quad 
	S := \sum_{i=1}^{\infty} \Phi(K^i) \omega\bigl(\{x \in U : |f(x)| > \sigma K^i\}\bigr) < \infty.
\end{align}
Moreover, there exists a constant $c = c(\sigma, K, \Phi) > 0$ such that the following estimate holds:
\begin{align}
	\frac{1}{c}S \leq \int_U \Phi(|f|) \omega(x) \, dx \leq c\bigl(\omega(U) + S\bigr).
\end{align}
\end{lemma}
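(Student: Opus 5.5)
The plan is a dyadic decomposition of $U$ by the level sets of $|f|$, combined with the $\Delta_2$ and $\nabla_2$ properties of $\Phi$ to compare $\Phi$ on consecutive levels. Set
\[
E_i:=\{x\in U: |f(x)|>\sigma K^i\},\qquad i\ge 0,
\]
and $E_0':=\{x\in U:|f(x)|\le \sigma K\}$. The sets $E_0'$ and $E_i\setminus E_{i+1}$, $i\ge1$, partition $U$ up to a null set.

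Since $\Phi$ is increasing and $\Delta_2$, for $t\in(0,\infty)$ and $M>0$ we have $\Phi(Mt)\le h_\Phi(M)\Phi(t)$, where $h_\Phi(M)<\infty$. In particular $\Phi(\sigma K^{i+1})\le c(\sigma,K,\Phi)\,\Phi(K^i)$, and conversely $\Phi(K^i)\le c(\sigma,K,\Phi)\,\Phi(\sigma K^{i})$. These two comparisons are the only structural facts about $\Phi$ needed.

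\emph{Upper bound.} Split the integral along the partition:
\[
\int_U\Phi(|f|)\omega\,dx\le \Phi(\sigma K)\,\omega(U)+\sum_{i\ge1}\Phi(\sigma K^{i+1})\,\omega(E_i\setminus E_{i+1}).
\]
By the comparison above, $\Phi(\sigma K^{i+1})\le c\,\Phi(K^i)$. Hence the sum is at most $c\sum_i\Phi(K^i)\omega(E_i)=cS$, and therefore
\[
\int_U\Phi(|f|)\omega\,dx\le c\bigl(\omega(U)+S\bigr).
\]
This is finite because $U$ is bounded and $\omega$ is locally integrable, which gives the implication $S<\infty\Rightarrow f\in L^\Phi_\omega(U)$.

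\emph{Lower bound.} Write $\omega(E_i)=\sum_{j\ge i}\omega(E_j\setminus E_{j+1})$, which holds up to the null set $\{|f|=\infty\}$; that set has measure zero when $f\in L^\Phi_\omega$. Exchanging the order of summation gives
\[
S=\sum_{j\ge1}\omega(E_j\setminus E_{j+1})\sum_{i=1}^{j}\Phi(K^i).
\]
The key step is the geometric-type bound
\[
\sum_{i=1}^j\Phi(K^i)\le c\,\Phi(K^j).
\]
This is where the $\nabla_2$ condition enters. It yields $\Phi(t)\le \tfrac12\theta^{-1}\Phi(\theta t)$, and iterating gives $\Phi(t)\le C\,M^{-\gamma}\Phi(Mt)$ for some $\gamma>1$ whenever $M\ge 1$, equivalently $i(\Phi)>1$. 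Applied with $M=K^{j-i}$, this makes the terms decay geometrically in $j-i$, with a constant depending on $K$ and $\Phi$. The main obstacle is the case $K$ close to $1$. There the crude ratio $K^{-\gamma}$ may not be small, so I would group indices into blocks of length $m$, chosen so that $K^{m}\ge\theta$, and bound each block by $m$ times its largest term. The resulting constant depends on $K$ and $\Phi$ only, as the statement allows, and $K=1$ is excluded. Strictly, $K=1$ must be treated as $K>1$; otherwise $S$ diverges unless $\omega(E_1)=0$.

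With this bound, and using $\Phi(K^j)\le c\,\Phi(\sigma K^j)\le c\,\Phi(|f|)$ on $E_j$,
\[
S\le c\sum_{j}\Phi(K^j)\,\omega(E_j\setminus E_{j+1})\le c\sum_j\int_{E_j\setminus E_{j+1}}\Phi(|f|)\,\omega\,dx\le c\int_U\Phi(|f|)\,\omega\,dx.
\]
This proves $\tfrac1cS\le\int_U\Phi(|f|)\omega\,dx$ and the implication $f\in L^\Phi_\omega(U)\Rightarrow S<\infty$. The compact support hypothesis is not used beyond ensuring that $\omega(U)<\infty$.
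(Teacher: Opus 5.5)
Your argument is correct for $K>1$. You are also right that the statement as written fails at $K=1$. For $f=2\sigma\chi_B$ with $B\Subset U$, one has $\int_U\Phi(|f|)\omega\,dx=\Phi(2\sigma)\omega(B)<\infty$, while every term of $S$ equals $\Phi(1)\omega(B)$, so $S=\infty$.

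The paper gives no proof of this lemma; it only quotes it from the Orlicz-space literature. Your level-set decomposition is the standard argument and serves as a self-contained substitute for that citation. In the paper's application $K=C>1$, so the $K=1$ defect is harmless there.

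One simplification: the $\nabla_2$ condition and the blocking device are not needed. Convexity of $\Phi$ with $\Phi(0)=0$ gives $\Phi(\lambda t)\le\lambda\Phi(t)$ for $\lambda\in[0,1]$. Hence $\Phi(K^i)\le K^{i-j}\Phi(K^j)$ for $i\le j$, and
\[
\sum_{i=1}^{j}\Phi(K^i)\le \frac{K}{K-1}\,\Phi(K^j),
\]
which is all your lower bound requires. There is no real obstacle for $K$ close to $1$: the geometric ratio only has to be strictly below $1$, not small, and the constant is allowed to depend on $K$. Your $\gamma$-decay route with blocking is valid, just more than necessary. The lemma therefore holds for any $N$-function in $\Delta_2$, with $\Delta_2$ used only for the two comparisons $\Phi(\sigma K^{i+1})\lesssim\Phi(K^i)$ and $\Phi(K^i)\lesssim\Phi(\sigma K^i)$.

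A small correction to your closing remark: the compact support of $f$ plays no role at all. The finiteness $\omega(U)<\infty$ comes from the boundedness of $U$ and the local integrability of $\omega$, as you correctly said in the upper-bound step, not from the support of $f$.
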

The following weighted covering lemma will be a key ingredient in the subsequent good-$\lambda$ argument. Its proof can be found in \cite[Lemma 3.8]{mp}. 
 
 \begin{lemma}\label{weight5}
 Let $\Omega \subset \mathbb{R}^n$ be a bounded $(\delta,R)$-Reifenberg flat domain with $0 < \delta < \frac{1}{8}$, and assume that the weight $\omega$ belongs to the Muckenhoupt class $A_q$ for some $q \in (1, \infty)$. Suppose that $\Omega$ is covered by a finite collection of balls $\{B_r(z_j)\}_{j=1}^{l}$ centered in $\overline{\Omega}$ with radius $0 < r \leq \frac{R}{2000}$. For any measurable sets $C$ and $D$ satisfying $C \subset D \subset \Omega$, if there exists a small constant $\varepsilon > 0$ such that
 \begin{align*}
 	\omega(C) &< \varepsilon \, \omega(B_r(z_j)) \qquad \text{for all } j=1,\ldots,l,
 \end{align*}
 and for every $\rho \in (0, 2r]$ and any ball $B_\rho(y)$ centered in $\Omega$, we have
 \begin{align*}
 	B_\rho(y) \cap \Omega \subset D \qquad \text{whenever} \qquad \omega(C \cap B_\rho(y)) \geq \varepsilon \, \omega(B_\rho(y)),
 \end{align*}
 then the following measure estimate holds:
 \begin{align*}
 	\omega(C) &\leq \varepsilon_1 \omega(D),
 \end{align*}
 where $\varepsilon_1 = 20^{nq}[\omega]_{A_q}^2 \varepsilon$.
 \end{lemma}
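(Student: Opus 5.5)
The plan is to run a Vitali-type stopping-radius argument with respect to the measure $\omega\,dx$. The $A_q$ condition and the Reifenberg flatness of $\Omega$ are used only for two comparisons: passing from a ball to its $5$-fold enlargement, and passing from a ball centered in $\Omega$ to its trace in $\Omega$.

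The basic tool is the standard consequence of the $A_q$ condition obtained from H\"older's inequality: for a ball $B$ and a measurable $L\subset B$,
\[
\omega(B)\le [\omega]_{A_q}\Bigl(\frac{|B|}{|L|}\Bigr)^{q}\omega(L).
\]
This is the left-hand inequality of Lemma \ref{weight2}, written with the explicit constant $[\omega]_{A_q}$. In particular $\omega$ is doubling, so the weighted Lebesgue differentiation theorem holds for $\omega\,dx$. Hence for $\omega$-a.e.\ $x\in C$ the ratio $\omega(C\cap B_\rho(x))/\omega(B_\rho(x))$ tends to $1$ as $\rho\to0$. Since $\omega\ll dx$ and $\omega>0$, this ratio is continuous in $\rho$.

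\emph{Step 1: stopping radii.} Fix such an $x\in C$ and pick $j$ with $x\in B_r(z_j)$. Then $B_r(z_j)\subset B_{2r}(x)$, so
\[
\omega(C\cap B_{2r}(x))\le\omega(C)<\varepsilon\,\omega(B_r(z_j))\le\varepsilon\,\omega(B_{2r}(x)).
\]
Define $\rho_x$ as the supremum of those $\rho\in(0,2r]$ for which the ratio is $\ge\varepsilon$. By continuity, $\rho_x\in(0,2r)$, the ratio equals $\varepsilon$ at $\rho_x$, and the ratio is $<\varepsilon$ on $(\rho_x,2r]$. Since $x\in\Omega$ and $\rho_x\le 2r$, the hypothesis gives $B_{\rho_x}(x)\cap\Omega\subset D$.

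\emph{Step 2: covering.} The radii are bounded, so Vitali's lemma yields disjoint balls $B_{\rho_i}(x_i)$ with $C\subset\bigcup_i B_{5\rho_i}(x_i)$ up to an $\omega$-null set. We claim $\omega(C\cap B_{5\rho_i}(x_i))\le\varepsilon\,\omega(B_{5\rho_i}(x_i))$ in both cases:
\begin{itemize}
\item if $5\rho_i\le 2r$, this is the stopping property;
\item if $5\rho_i>2r$, then $B_r(z_j)\subset B_{2r}(x_i)\subset B_{5\rho_i}(x_i)$ for the $j$ with $x_i\in B_r(z_j)$, and $\omega(C)<\varepsilon\,\omega(B_r(z_j))$ gives the claim.
\end{itemize}
Applying the $A_q$ bound with $L=B_{\rho_i}(x_i)$ gives $\omega(B_{5\rho_i})\le 5^{nq}[\omega]_{A_q}\,\omega(B_{\rho_i})$.

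\emph{Step 3: Reifenberg comparison.} This is the step needing most care. We want $|B_\rho(y)\cap\Omega|\ge 4^{-n}|B_\rho(y)|$ for $y\in\Omega$ and $\rho\le 2r\le R/1000$. We split into two cases:
\begin{itemize}
\item if $\operatorname{dist}(y,\partial\Omega)\ge\rho/2$, then $B_{\rho/2}(y)\subset\Omega$;
\item otherwise take $x'\in\partial\Omega$ with $|x'-y|<\rho/2$ and apply the flatness condition at $x'$ with a radius comparable to $\rho/2$. The set $\{y_n>\delta\rho/2\}\cap B_{\rho/2}(x')$ then lies in $\Omega\cap B_\rho(y)$. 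Because $\delta<1/8$, its measure is a fixed fraction of $|B_{\rho/2}|$; one has to adjust the radius so that the constant comes out as $4^{-n}$.
\end{itemize}
Applying the $A_q$ bound once more gives $\omega(B_{\rho_i}(x_i))\le 4^{nq}[\omega]_{A_q}\,\omega(B_{\rho_i}(x_i)\cap\Omega)$.

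\emph{Conclusion.} Combining the three steps,
\[
\omega(C)\le\varepsilon\sum_i\omega(B_{5\rho_i})\le \varepsilon\,20^{nq}[\omega]_{A_q}^2\sum_i\omega(B_{\rho_i}\cap\Omega)\le\varepsilon_1\,\omega(D).
\]
The last inequality holds because the sets $B_{\rho_i}(x_i)\cap\Omega$ are pairwise disjoint and each lies in $D$. I expect the constant-tracking in Step 3 to be the main technical obstacle; the remaining steps are routine.
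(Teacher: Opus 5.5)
Your proposal is correct and follows essentially the same route as the paper. The paper gives no proof of its own and cites a reference whose proof is this standard Vitali stopping-radius argument: stopping radii from continuity of $\rho\mapsto\omega(C\cap B_\rho(x))/\omega(B_\rho(x))$, the $5r$-covering, and the bound $\omega(B)\le[\omega]_{A_q}(|B|/|L|)^q\omega(L)$ applied twice (for $B_{5\rho_i}$ versus $B_{\rho_i}$, and for $B_{\rho_i}$ versus $B_{\rho_i}\cap\Omega$), which produces exactly $20^{nq}[\omega]_{A_q}^2$.

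The one loose end is Step 3. Splitting at $\rho/2$ with the inscribed-ball bound only gives $((1-\delta)/4)^n$, which is below $4^{-n}$. Split instead at $\rho/4$: if $\operatorname{dist}(y,\partial\Omega)<\rho/4$, flatness at a nearby boundary point with radius $3\rho/4\le R$ places a ball of radius $\tfrac{3(1-\delta)}{8}\rho>\rho/4$ inside $\Omega\cap B_\rho(y)$, which gives the required $4^{-n}$.
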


  \section{Comparison estimates}
 In this section, we establish a sequence of comparison estimates by introducing several auxiliary problems associated with the original double obstacle problem. These estimates will be used in the next section to derive the global weighted Orlicz estimate.
 
We first assume that
\begin{equation}\label{ass1}
  \sup_{0<\varrho<5} \fint_{B_\varrho} \theta(a, B_\varrho)(x) \, \mathrm{d}x \leq \delta,
\end{equation}

\begin{equation}\label{ass2}
\fint_{\Omega_5} G(|Du|) \, \mathrm{d}x \leq 1, \qquad \fint_{\Omega_5}\left[  G(|F|)+G(|Dh|)+G(|D\psi_1|)+G(|D\psi_2|)\right]  \, \mathrm{d}x \leq \delta,
\end{equation}
where $\delta \in (0, \frac{1}{8})$ is determined later universally.

 The arguments in this subsection shall need the condition
\begin{align}\label{ass3}
 B_5^+ \subset \Omega_5 \subset \left\{ x \in B_5 : x_n > -\frac{80}{7}\delta \right\},
\end{align}
where $\delta$ is as in \eqref{ass1} and \eqref{ass2}.
\begin{lemma}\label{uw1}
	 Assume that \eqref{u1}-\eqref{u3} are satisfied and   $u \in \mathcal{A}_{h}(\Omega)$   satisfies the variational inequality \eqref{u}.
	 For any $\varepsilon > 0$ there exists a small $\delta=\delta(n,l,L,i_g,s_g,\varepsilon) \in (0,\frac{1}{8})$ such that if \eqref{ass2} holds for such $\delta$ and  $w_1 \in W^{1,G}(\Omega_5)$ with $\psi_1-h\leqslant w_1\leqslant \psi_2-h $ a.e. in $\Omega_5$ solves
	 \begin{equation}\label{w1}
	 	\left\{\begin{array}{r@{\ \ }c@{\ \ }ll}
	 		&\int_{\Omega_5}  a(x,  Dw_1) \cdot D(v-w_1)dx \geqslant \int_{\Omega_5}  b(x, F) \cdot D(v-w_1)dx,  \\[0.05cm]
	 		&w_1=u-h \ \ \mbox{on}\ \ \partial \Omega_5 \,,
	 	\end{array}\right.
	 \end{equation}
	where $v\in w_1+W_0^{1,G}(\Omega_5)$ satisfies $\psi_1-h \leqslant v \leqslant \psi_2-h$ a.e. in $\Omega_5$.
	Then
	\begin{equation}\label{vuw1}
		\fint_{\Omega_5} G(|Dw_1|)\,dx \leq c \quad \text{and} \quad \fint_{\Omega_5} |V(Du)-V(Dw_1)|^2\,dx \leq \varepsilon.  
	\end{equation}
	Here, $c > 0$ depends on $n,l,L,i_g$ and $s_g$, but is independent of $\varepsilon$.
	
\end{lemma}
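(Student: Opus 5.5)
The plan is a direct energy comparison. I test the variational inequality \eqref{u} with a competitor built from $w_1$, and the inequality \eqref{w1} with a competitor built from $u$. Adding them, the right-hand side terms containing $b(x,F)$ cancel. This is why no smallness of $F$ is needed here, only smallness of $Dh$.

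\textbf{Step 1: test functions.} In \eqref{w1} I take $v=u-h$. It is admissible because $\psi_1-h\le u-h\le\psi_2-h$ a.e. and $v-w_1=0$ on $\partial\Omega_5$, so $v\in w_1+W_0^{1,G}(\Omega_5)$. In \eqref{u} I take $v=w_1+h$ in $\Omega_5$ and $v=u$ in $\Omega\setminus\Omega_5$. Since $w_1+h=u$ on $\partial\Omega_5$, this $v$ lies in $W^{1,G}_h(\Omega)$. Moreover $\psi_1\le w_1+h\le\psi_2$, so $v\in\mathcal A_h(\Omega)$, and $D(v-u)$ is supported in $\Omega_5$. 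The two inequalities read
\[
\int_{\Omega_5}\bigl(a(x,Du)-b(x,F)\bigr)\cdot D(w_1+h-u)\,dx\ge0,
\qquad
\int_{\Omega_5}\bigl(a(x,Dw_1)-b(x,F)\bigr)\cdot D(u-h-w_1)\,dx\ge 0 .
\]
Adding them and rearranging gives
\[
\int_{\Omega_5}\bigl(a(x,Du)-a(x,Dw_1)\bigr)\cdot(Du-Dw_1)\,dx\le\int_{\Omega_5}\bigl(a(x,Du)-a(x,Dw_1)\bigr)\cdot Dh\,dx .
\]

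\textbf{Step 2: estimating the right-hand side.} By Lemma \ref{vg}, the left side is comparable to $\int_{\Omega_5}|V(Du)-V(Dw_1)|^2dx$. On the right I use \eqref{u1}, which gives $|a(x,\xi)|\le Lg(|\xi|)$. I then apply Lemma \ref{gyoung} with $B=G$, together with \eqref{a(x)4} and the relation $g(t)\approx G(t)/t$ (from \eqref{u3}). This yields, for every $\tau>0$,
\[
g(|\xi|)\,|Dh|\le \tau\, G(|\xi|)+c(\tau)G(|Dh|).
\]
Next, Lemma \ref{vg} gives
\[
G(|Dw_1|)\approx |V(Dw_1)|^2\le 2|V(Du)-V(Dw_1)|^2+2|V(Du)|^2\le c|V(Du)-V(Dw_1)|^2+cG(|Du|).
\]
Combining these bounds and taking averages over $\Omega_5$,
\[
\fint_{\Omega_5}|V(Du)-V(Dw_1)|^2dx\le c\tau\fint_{\Omega_5}|V(Du)-V(Dw_1)|^2dx+c\tau\fint_{\Omega_5}G(|Du|)dx+c(\tau)\fint_{\Omega_5}G(|Dh|)dx .
\]

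\textbf{Step 3: absorption and choice of constants.} Choosing $\tau$ small absorbs the first term into the left side. Using \eqref{ass2}, the result is
\[
\fint_{\Omega_5}|V(Du)-V(Dw_1)|^2dx\le c\tau+c(\tau)\delta .
\]
I first fix $\tau$ so that $c\tau\le\varepsilon/2$, then take $\delta$ so small that $c(\tau)\delta\le\varepsilon/2$; this is the second bound in \eqref{vuw1}. The first bound follows from the inequality $G(|Dw_1|)\le c|V(Du)-V(Dw_1)|^2+cG(|Du|)$ of Step 2. Using \eqref{ass2} and the absorbed estimate with, say, $\tau$ fixed universally and $\delta<1/8$, I get $\fint_{\Omega_5}G(|Dw_1|)\,dx\le c$ independently of $\varepsilon$.

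\textbf{Main obstacle.} The only delicate points are the following.
\begin{itemize}
\item Checking that the glued competitor $v$ is admissible for \eqref{u}. This uses that $w_1+h-u\in W_0^{1,G}(\Omega_5)$ extends by zero to a $W^{1,G}(\Omega)$ function.
\item The finiteness needed for the absorption, namely that $\int_{\Omega_5}G(|Dw_1|)\,dx<\infty$. This holds since $w_1\in W^{1,G}(\Omega_5)$.
\item Controlling $g(|Dw_1|)|Dh|$ through $V$. Here the Young inequality must be paired with $G^*(g(t))\le cG(t)$, which relies on the $\Delta_2\cap\nabla_2$ property of $G$ that follows from $1\le i_g+1$ and $s_g<\infty$.
\end{itemize}
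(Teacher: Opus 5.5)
Your proposal is correct. It differs from the paper's proof only in how the energy bound for $w_1$ is obtained.

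The paper proves $\fint_{\Omega_5}G(|Dw_1|)\,dx\le c$ first, and independently. It tests \eqref{w1} with $v=u-h$ alone and applies Young's inequality to both $a(x,Dw_1)\cdot D(u-h)$ and $b(x,F)\cdot D(w_1+h-u)$. This gives $\fint_{\Omega_5}G(|Dw_1|)\,dx\le c\fint_{\Omega_5}[G(|F|)+G(|Du|)+G(|Dh|)]\,dx$. Only then does it derive the same combined inequality you obtain and apply Young to $[a(x,Du)-a(x,Dw_1)]\cdot Dh$. It uses the already established energy bound to make the term $c\varepsilon_4\fint_{\Omega_5} G(|Dw_1|)\,dx$ small.

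You instead close the comparison estimate directly. You absorb $G(|Dw_1|)$ through $|V(Dw_1)|^2\le 2|V(Du)-V(Dw_1)|^2+2|V(Du)|^2$, and recover the energy bound afterwards as a corollary by fixing $\tau$ universally.

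Each route has an advantage. Yours saves one test-function computation. It also shows that, because $b(x,F)$ cancels, the lemma needs no quantitative information on $F$ at all: only $\fint_{\Omega_5} G(|Du|)\,dx\le 1$ and smallness of $\fint_{\Omega_5} G(|Dh|)\,dx$. The paper's route gives a stand-alone a priori estimate for the $w_1$-problem in terms of its own data. It also follows the same template reused in Lemmas \ref{w1w2l} and \ref{w2w3l}, where the energy bounds of the intermediate functions feed into the next comparison.

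One small correction: the $\nabla_2$ property of $G$ requires the strict inequality $i_g>0$, not $1\le 1+i_g$. For the step you flag you do not need $\nabla_2$ anyway, since $G^*(g(t))=tg(t)-G(t)\le s_g\,G(t)$ directly.
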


\begin{proof}
We first prove the first estimate in \eqref{vuw1}.
	Since $	u-h\in w_1+ W_0^{1,G}(\Omega_5)$ satisfies 
	\[
	\psi_1-h\leq u-h\leq\psi_2-h
	\quad\text{a.e. in }\Omega_5.
	\]
	Thus, $u-h$ is an comparison function in the variational inequality
	satisfied by $w_1$. It follows that
	\begin{align}\label{ineqw1}
		&\int_{\Omega_5}
		a(x,Dw_1)\cdot D(u-h-w_1)\,dx
		\geq
		\int_{\Omega_5}
		b(x,F)\cdot D(u-h-w_1)\,dx.
	\end{align}
By applying \eqref{u1}, \eqref{u2}, \eqref{a(x)4}, Lemma \ref{gyoung} 
 and Lemma \ref{vg}, we have
\begin{align*}
	\fint_{\Omega_5} G(|Dw_1|)\,dx
	&\leq c\fint_{\Omega_5}
	a(x,Dw_1)\cdot Dw_1\,dx
	\\
	&\leq	c\fint_{\Omega_5}	a(x,Dw_1)\cdot D(u-h)\,dx
+	c\fint_{\Omega_5}
	b(x,F)\cdot D(w_1+h-u)\,dx	\\
	&\leq	c\varepsilon_1	\fint_{\Omega_5}
	G^{*}\bigl(g(|Dw_1|)\bigr)\,dx+	c(\varepsilon_1)
	\fint_{\Omega_5}	G(|Du-Dh|)\,dx
	\\
	&\quad+
	c\varepsilon_2
	\fint_{\Omega_5}
	G(|Dw_1|)\,dx+	c(\varepsilon_2)
	\fint_{\Omega_5}
	G^{*}\bigl(g(|F|)\bigr)\,dx	\\
		&\leq	c(\varepsilon_1+\varepsilon_2)
	\fint_{\Omega_5}	G(|Dw_1|)\,dx	+
	c(\varepsilon_1,\varepsilon_2)
	\fint_{\Omega_5}
	\bigl[
	G(|F|)
	+
	G(|Du|)
	+
	G(|Dh|)
	\bigr]\,dx.
\end{align*}
	Choose $\varepsilon_1,\varepsilon_2>0$ sufficiently small so that
	$
	c(\varepsilon_1+\varepsilon_2)\leqslant \frac{1}{2}.
	$
	Absorbing the corresponding term into the left-hand side and using
	\eqref{ass2}, we obtain
	\[
	\fint_{\Omega_5} G(|Dw_1|)\,dx \leq c.
	\]

 	We then prove the second estimate in \eqref{vuw1}.	Since
 $	w_1+h\in u+W_0^{1,G}(\Omega_5)$ satisfies
 \[
 \psi_1\leq w_1+h\leq\psi_2
 \quad\text{a.e. in }\Omega_5.
 \]
 The function defined by
 \[
 v(x)=
 \begin{cases}
 	w_1(x)+h(x), & x\in\Omega_5,\\
 	u(x), & x\in\Omega\setminus\Omega_5,
 \end{cases}
 \]
 belongs to $\mathcal{A}_h(\Omega)$. Therefore, it is an admissible test
 function in the variational inequality \eqref{u} satisfied by $u$, and we obtain
 \begin{align*}
 	\int_{\Omega_5}
 	a(x,Du)\cdot D(w_1+h-u)\,dx
 	\geqslant
 	\int_{\Omega_5}
 	b(x,F)\cdot D(w_1+h-u)\,dx.
 \end{align*}
	Combining this inequality with \eqref{ineqw1},   we   obtain
	\begin{align*}
		&\int_{\Omega_5}
		\bigl[a(x,Du)-a(x,Dw_1)\bigr]
		\cdot(Du-Dw_1-Dh)\,dx
		\leq0.
	\end{align*}
	Subsequently, using \eqref{u1}, \eqref{u2}, \eqref{a(x)4}, Lemma \ref{gyoung} 
	and Lemma \ref{vg}, we have
	\begin{align*}
		\fint_{\Omega_5} |V(Du)-V(Dw_1)|^2\,dx &\leq c\fint_{\Omega_5}
		\bigl[a(x,Du)-a(x,Dw_1)\bigr]
		\cdot(Du-Dw_1)\,dx
		\\
		& \leq c\fint_{\Omega_5}
		\bigl[a(x,Du)-a(x,Dw_1)\bigr]
		\cdot Dh\,dx \\
			& \leq	c\varepsilon_3	\fint_{\Omega_5}
		G^{*}\bigl(g(|Du|)\bigr)\,dx+		c\varepsilon_4	\fint_{\Omega_5}	G^{*}\bigl(g(|Dw_1|)\bigr)\,dx \\
		&\quad +c(\varepsilon_3,\varepsilon_4)	\fint_{\Omega_5}G(|Dh|)dx \\
			& \leq c\varepsilon_3	\fint_{\Omega_5}
			G(|Du|)dx+c\varepsilon_4\fint_{\Omega_5}	G(|Dw_1|)dx+c(\varepsilon_3,\varepsilon_4)	\fint_{\Omega_5}G(|Dh|)dx.
	\end{align*}
	Given $\varepsilon>0$, we first choose $\varepsilon_3,\varepsilon_4$ small enough such that 
	\[ c\varepsilon_3	\fint_{\Omega_5}
	G(|Du|)dx+c\varepsilon_4\fint_{\Omega_5}	G(|Dw_1|)dx \leq \frac{\varepsilon}{2}.
	\]
	Then we take $\delta$ small enough such that 
	\[ c(\varepsilon_3,\varepsilon_4)	\fint_{\Omega_5}G(|Dh|)dx\leq \frac{\varepsilon}{2}.
	\]
Therefore, 
\[ \fint_{\Omega_5} |V(Du)-V(Dw_1)|^2\,dx \leq \varepsilon.
\]
 This completes the proof.
\end{proof}

\begin{lemma}\label{w1w2l}
	Assume that \eqref{u1}-\eqref{u3} are satisfied and   $w_1 \in u-h+ W_0^{1,G}(\Omega_5)$ with $\psi_1-h\leqslant w_1\leqslant \psi_2-h $ a.e. in $\Omega_5$ solves the inequality \eqref{w1}.  
	For any $\varepsilon > 0$ there exists a small $\delta=\delta(n,l,L,i_g,s_g,\varepsilon) \in (0,\frac{1}{8})$ such that if \eqref{ass2} holds for such $\delta$ and  $w_2 \in W^{1,G}(\Omega_5)$ with $ w_2 \geqslant \psi_1-h$ a.e. in $\Omega_5$ solves
	\begin{equation}\label{w2}
		\left\{\begin{array}{r@{\ \ }c@{\ \ }ll}
			&\int_{\Omega_5}  a(x,  Dw_2) \cdot D(v-w_2)dx \geqslant \int_{\Omega_5}  a(x,  D\psi_2-Dh) \cdot D(v-w_2)dx,  \\[0.05cm]
			&w_2=w_1 \ \ \mbox{on}\ \ \partial \Omega_5 \,,
		\end{array}\right.
	\end{equation}
	where $v\in w_2+W_0^{1,G}(\Omega_5)$ satisfies $ v \geqslant \psi_1-h$ a.e. in $\Omega_5$.
	Then
	\begin{equation}\label{w1w2}
		\fint_{\Omega_5} G(|Dw_2|)\,dx \leq c \quad \text{and} \quad \fint_{\Omega_5} |V(Dw_1)-V(Dw_2)|^2\,dx \leq \varepsilon.  
	\end{equation}
	Here, $c > 0$ depends on $n,l,L,i_g$ and $s_g$, but is independent of $\varepsilon$.
	\end{lemma}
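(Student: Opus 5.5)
The plan is to prove both estimates in \eqref{w1w2} by testing the variational inequalities for $w_1$ and $w_2$ against each other. The main work is checking that each test function is admissible, since the two obstacle problems have different constraint sets. Throughout I write $\psi:=\psi_2-h$. By \eqref{u1} and the $\Delta_2$ property of $G$ (which follows from \eqref{u3}), the source term $a(x,D\psi)$ satisfies
\[
|a(x,D\psi)|\le L g(|D\psi|), \qquad G(|D\psi|)\le c\bigl[G(|D\psi_2|)+G(|Dh|)\bigr].
\]
Hence by \eqref{ass2} its modular average over $\Omega_5$ is at most $c\delta$. This plays the role that $b(x,F)$ played in Lemma \ref{uw1}.

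\emph{Step 1 (energy bound).} Since $w_1\ge \psi_1-h$ and $w_1=w_2$ on $\partial\Omega_5$, the choice $v=w_1$ is admissible in \eqref{w2}. This gives
\[
\int a(x,Dw_2)\cdot Dw_2 \le \int a(x,Dw_2)\cdot Dw_1+\int a(x,D\psi)\cdot D(w_2-w_1).
\]
I then apply Lemma \ref{vg} on the left. On the right I use Lemma \ref{gyoung} together with \eqref{a(x)4}, absorbing $\varepsilon$-multiples of $\fint G(|Dw_2|)$. The remaining terms are bounded by the first estimate of Lemma \ref{uw1} for $\fint G(|Dw_1|)$ and by \eqref{ass2}. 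This yields $\fint_{\Omega_5}G(|Dw_2|)\le c$, exactly as in the first part of Lemma \ref{uw1}.

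\emph{Step 2 (key observation: $w_2\le \psi$).} This is the step I expect to be the real obstacle. To test the inequality for $w_1$ with $\max(w_1,w_2)$, one needs $\max(w_1,w_2)\le\psi$, and a priori $w_2$ has no upper constraint. To get it, I test \eqref{w2} with $v=\min(w_2,\psi)$. This $v$ is admissible: $\psi_2\ge\psi_1$ gives $v\ge\psi_1-h$, and on $\partial\Omega_5$ we have $w_2=w_1\le\psi$, so $v=w_2$ there. The inequality becomes
\[
\int_{\{w_2>\psi\}}\bigl[a(x,Dw_2)-a(x,D\psi)\bigr]\cdot D(w_2-\psi)\,dx\le 0 .
\]
By the strict monotonicity in Lemma \ref{vg}, $D(w_2-\psi)^+=0$. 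Since $(w_2-\psi)^+$ vanishes on $\partial\Omega_5$, it follows that $w_2\le\psi$ a.e.

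\emph{Step 3 (comparison).} By Step 2, $\max(w_1,w_2)$ and $\min(w_1,w_2)$ both lie between $\psi_1-h$ and $\psi_2-h$, and both agree with $w_1=w_2$ on $\partial\Omega_5$.
\begin{itemize}
\item[] Testing \eqref{w1} with $v=\max(w_1,w_2)$ and $v=\min(w_1,w_2)$ gives the inequalities for $w_1$ on the sets $\{w_2>w_1\}$ and $\{w_2<w_1\}$.
\item[] Testing \eqref{w2} with $v=\min(w_1,w_2)$ and $v=\max(w_1,w_2)$ gives the corresponding inequalities for $w_2$ on the same two sets.
\end{itemize}
Adding the four inequalities gives
\[
\int_{\Omega_5}\bigl[a(x,Dw_1)-a(x,Dw_2)\bigr]\cdot D(w_1-w_2)\,dx\le \int_{\Omega_5}\bigl[b(x,F)-a(x,D\psi)\bigr]\cdot D(w_1-w_2)\,dx .
\]
By Lemma \ref{vg} the left side controls $\fint|V(Dw_1)-V(Dw_2)|^2$. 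On the right I apply Lemma \ref{gyoung} with a small parameter $\varepsilon_3$ placed on $G(|Dw_1|)+G(|Dw_2|)$, which is bounded by a constant via Lemma \ref{uw1} and Step 1. The complementary term is $c(\varepsilon_3)\fint\bigl[G(|F|)+G(|D\psi_2|)+G(|Dh|)\bigr]\le c(\varepsilon_3)\delta$, using \eqref{u2}, \eqref{a(x)4} and \eqref{ass2}. Choosing first $\varepsilon_3$ and then $\delta$ small, as at the end of the proof of Lemma \ref{uw1}, gives $\fint_{\Omega_5}|V(Dw_1)-V(Dw_2)|^2\le\varepsilon$.
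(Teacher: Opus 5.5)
Your proposal is correct and follows the paper's proof. It uses the same energy estimate via $v=w_1$ in \eqref{w2}, then the comparison $w_2\le\psi_2-h$, then mutual testing, which yields exactly the paper's inequality $\int_{\Omega_5}[a(x,Dw_1)-a(x,Dw_2)]\cdot D(w_1-w_2)\,dx\le\int_{\Omega_5}[b(x,F)-a(x,D\psi_2-Dh)]\cdot D(w_1-w_2)\,dx$.

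There are two minor differences. You prove $w_2\le\psi_2-h$ yourself by testing with $\min(w_2,\psi_2-h)$, whereas the paper cites this comparison from Byun--Liang--Ok. Your four max/min tests are also redundant: once $w_2\le\psi_2-h$ is known, $v=w_2$ is directly admissible in \eqref{w1}, and your four inequalities sum to the same two the paper uses.
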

	
	 \begin{proof}
	The first inequality in \eqref{w1w2} is proved as follows.
	 Taking $v=w_1\in w_2+W_0^{1,G}(\Omega_5)$ with
	 $v\geqslant\psi_1-h$ a.e.\ in $\Omega_5$
	 as test function in \eqref{w2}, we derive
	 \begin{align}\label{w22}
	 	 \int_{\Omega_5}a(x,Dw_2)\cdot D(w_1-w_2)\,dx
	 	\geqslant
	 	\int_{\Omega_5}a(x,D\psi_{2}-Dh)\cdot D(w_1-w_2)\,dx.
	 \end{align}
	 By applying \eqref{u1}, \eqref{a(x)4}, Lemma \ref{gyoung},
	 and Lemma \ref{vg}, we have
	 \begin{align*}
	 	\fint_{\Omega_5}G(|Dw_2|)\,dx
	 	&\leqslant  
	 	c\fint_{\Omega_5}a(x,Dw_2)\cdot Dw_2\,dx
	 	\\
	 	&\leqslant
	 	c\fint_{\Omega_5}a(x,Dw_2)\cdot Dw_1\,dx
	 	+
	 	c\fint_{\Omega_5}a(x,D\psi_{2}-Dh)\cdot D(w_2-w_1)\,dx
	 	\\
	 	&\leqslant
	 	c\varepsilon_1\fint_{\Omega_5}G^{*}\bigl(g(|Dw_2|)\bigr)\,dx
	 	+
	 	c(\varepsilon_1)\fint_{\Omega_5}G(|Dw_1|)\,dx
	 	\\
	 	&\quad+
	 	c\varepsilon_2\fint_{\Omega_5}G(|Dw_2|)dx
	 	+
	 	c(\varepsilon_2)\fint_{\Omega_5}G^*(g(|D\psi_{2}-Dh|))\,dx
	 	\\
	 	&\leqslant
	 	c(\varepsilon_1+\varepsilon_2)\fint_{\Omega_5}G(|Dw_2|)\,dx
	 	+
	 	c(\varepsilon_1,\varepsilon_2)
	 	\fint_{\Omega_5}\bigl[G(|Dw_1|)+G(|D\psi_{2}|)+G(|Dh|)\bigr]\,dx.
	 \end{align*}
	 We take $\varepsilon_1,\varepsilon_2$ small enough so that
	 $c(\varepsilon_1+\varepsilon_2)<1$.
	 Then, using \eqref{ass2} and \eqref{vuw1}, we obtain
	 \[
	 \fint_{\Omega_5}G(|Dw_2|)\,dx\leqslant c.
	 \]
	 
	 The second inequality in \eqref{w1w2} is proved below.
	 Using Lemma 3.1 in \cite{blo}, we have $w_2\leqslant \psi_2-h$ a.e. in $\Omega_5$. Then, taking $v=w_2$ in \eqref{w1}  and combining with \eqref{w22}, we obtain 
	 \begin{align*}
	 	\fint_{\Omega_5}
	 	\left|V(Dw_1)-V(Dw_2)\right|^2\,dx
	 	&\leq
	 	c\fint_{\Omega_5}
	 	\bigl[a(x,Dw_1)-a(x,Dw_2)\bigr]
	 	\cdot(Dw_1-Dw_2)\,dx
	 	\\
	 	&\leq
	 	c\fint_{\Omega_5}
	 	\left[
	 	b(x,F)-a\bigl(x,D\psi_2-Dh\bigr)
	 	\right]
	 	\cdot D(w_1-w_2)\,dx
	 	\\
	 	&\leq
	 	c(\varepsilon_3+\varepsilon_4)
	 	\fint_{\Omega_5}
	 	G(|Dw_1-Dw_2|)\,dx
	 	\\
	 	&\quad+
	 	c(\varepsilon_3,\varepsilon_4)
	 	\fint_{\Omega_5}
	 	\left[
	 	G^*\bigl(g(|F|)\bigr)
	 	+
	 	G^*\bigl(g(|D\psi_2-Dh|)\bigr)
	 	\right]\,dx
	 	\\
	 	&\leq
	 	c(\varepsilon_3+\varepsilon_4)+
	 	c(\varepsilon_3,\varepsilon_4)
	 	\fint_{\Omega_5}
	 	\left[
	 	G(|F|)
	 	+
	 	G(|D\psi_2|)
	 	+
	 	G(|Dh|)
	 	\right]\,dx.
	 \end{align*}
	 We choose $\varepsilon_3,\varepsilon_4>0$ sufficiently small such that
	 \[
	 c(\varepsilon_3+\varepsilon_4)\leq\frac{\varepsilon}{2}.
	 \]
	 After fixing $\varepsilon_3$ and $\varepsilon_4$, we choose $\delta>0$
	 sufficiently small such that
	 \[
	 c(\varepsilon_3,\varepsilon_4)\delta
	 \leq\frac{\varepsilon}{2}.
	 \]
	 This completes the proof.
	\end{proof}
	
		\begin{lemma}\label{w2w3l}
		Assume that \eqref{u1}-\eqref{u3} are satisfied and   $w_2 \in w_1+ W_0^{1,G}(\Omega_5)$ with $ w_2 \geqslant \psi_1-h$ a.e. in $\Omega_5$ solves  the inequality \eqref{w2}.  
		For any $\varepsilon > 0$ there exists a small $\delta=\delta(n,l,L,i_g,s_g,\varepsilon) \in (0,\frac{1}{8})$ such that if \eqref{ass2} holds for such $\delta$ and  $w_3 \in W^{1,G}(\Omega_5)$ is a weak solution of 
		\begin{equation}\label{w3}
			\left\{\begin{array}{r@{\ \ }c@{\ \ }ll}
					-\operatorname{div}a(x,Dw_3) &=&-\operatorname{div}a(x,D\psi_1-Dh) \ \ &\mbox{in}\ \   \Omega_5 \,,  \\[0.05cm]
				 w_3 &=&w_2 \ \ &\mbox{on}\ \ \partial \Omega_5 \,.
			\end{array}\right.
		\end{equation}
		Then
		\begin{equation}\label{w2w3}
			\fint_{\Omega_5} G(|Dw_3|)\,dx \leq c \quad \text{and} \quad \fint_{\Omega_5} |V(Dw_2)-V(Dw_3)|^2\,dx \leq \varepsilon.  
		\end{equation}
		Here, $c > 0$ depends on $n,l,L,i_g$ and $s_g$, but is independent of $\varepsilon$.
	\end{lemma}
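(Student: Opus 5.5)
The plan follows the pattern of Lemmas \ref{uw1} and \ref{w1w2l}. First we derive the energy bound for $w_3$ by testing \eqref{w3} with $w_3-w_2\in W_0^{1,G}(\Omega_5)$. Then we derive the comparison estimate by exploiting a one-sided relation between the obstacle problem \eqref{w2} and the equation \eqref{w3}. The key structural fact is that $w_3$ is, in a sense, the unconstrained version of $w_2$. Because of this, the natural tool is a comparison principle: $w_2$ is a supersolution of $-\operatorname{div}a(x,Dw)=-\operatorname{div}a(x,D\psi_2-Dh)$, and it solves the equation with datum $a(x,D\psi_1-Dh)$ on the noncontact set $\{w_2>\psi_1-h\}$.

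\emph{Step 1 (energy bound).} Test \eqref{w3} with $w_3-w_2$ to get
\[
\fint_{\Omega_5}a(x,Dw_3)\cdot Dw_3\,dx=\fint_{\Omega_5}a(x,Dw_3)\cdot Dw_2\,dx+\fint_{\Omega_5}a(x,D\psi_1-Dh)\cdot D(w_3-w_2)\,dx .
\]
Apply Lemma \ref{vg}, Lemma \ref{gyoung} and \eqref{a(x)4} exactly as in Lemma \ref{w1w2l}, absorb the small multiples of $\fint G(|Dw_3|)$, and use \eqref{w1w2} and \eqref{ass2}. This yields $\fint_{\Omega_5}G(|Dw_3|)\,dx\le c$.

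\emph{Step 2 (comparison).} We need a test function in \eqref{w2} that is admissible, namely $v\ge\psi_1-h$. The choice $v=w_3$ is not known to be admissible, so we use $v=\max\{w_3,\psi_1-h\}$ instead, which has the same boundary values as $w_2$ provided $w_2\ge\psi_1-h$ on $\partial\Omega_5$; this holds since $w_2=w_1\ge\psi_1-h$ there. Alternatively, following the strategy of \cite{blo}, we could first show by a comparison argument (as in their Lemma 3.1) that $w_3\ge\psi_1-h$ fails only in a controlled way. The cleaner route is to write $v=w_3+(\psi_1-h-w_3)_+$. Testing \eqref{w2} with $v$ and testing \eqref{w3} with $w_2-w_3$, then subtracting, gives
\[
\fint_{\Omega_5}[a(x,Dw_2)-a(x,Dw_3)]\cdot D(w_2-w_3)\,dx\le \fint_{\Omega_5}[a(x,D\psi_1-Dh)-a(x,D\psi_2-Dh)]\cdot D(w_2-w_3)\,dx+R,
\]
where the remainder is $R=\fint [a(x,Dw_2)-a(x,D\psi_2-Dh)]\cdot D(\psi_1-h-w_3)_+\,dx$. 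Every data term is controlled by Young's inequality through $\varepsilon_5\fint G(|Dw_2-Dw_3|)$, which is bounded by a constant by Step 1 and \eqref{w1w2}, plus $c(\varepsilon_5)\fint[G(|D\psi_1|)+G(|D\psi_2|)+G(|Dh|)]\le c(\varepsilon_5)\delta$. The left-hand side dominates $\fint|V(Dw_2)-V(Dw_3)|^2$ by Lemma \ref{vg}.

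\emph{Main obstacle.} The hard part is the remainder $R$, that is, controlling $D(\psi_1-h-w_3)_+$ in $L^G$ by the data. My first attempt is to test \eqref{w3} with $(\psi_1-h-w_3)_+\in W^{1,G}_0(\Omega_5)$. This is allowed because $w_3=w_2\ge\psi_1-h$ on $\partial\Omega_5$. On the set $\{w_3<\psi_1-h\}$ this gives
\[
\int[a(x,D\psi_1-Dh)-a(x,Dw_3)]\cdot D(\psi_1-h-w_3)\le 0 ,
\]
so $V(Dw_3)=V(D\psi_1-Dh)$ there, and in fact $(\psi_1-h-w_3)_+\equiv0$ by strict monotonicity. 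Hence $w_3\ge\psi_1-h$, so $w_3$ itself is admissible and $R=0$. The estimate then follows by choosing $\varepsilon_5$ small and then $\delta$ small, as in the previous lemmas.
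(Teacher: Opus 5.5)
Your proposal is correct and is essentially the paper's proof. Step 1 is identical, and once $w_3\ge\psi_1-h$ is known your remainder $R$ vanishes, so Step 2 reduces to taking $v=w_3$ in \eqref{w2} and combining with \eqref{aw3}, exactly as the paper does. The one difference is that the paper cites Lemma 3.2 of \cite{blo} for $w_3\ge\psi_1-h$, whereas you prove it directly by testing \eqref{w3} with $(\psi_1-h-w_3)_+\in W_0^{1,G}(\Omega_5)$, i.e.\ by comparing with $\psi_1-h$, which solves the same equation. Two harmless slips: in your display the data term should be $[a(x,D\psi_2-Dh)-a(x,D\psi_1-Dh)]\cdot D(w_2-w_3)$, and in your opening heuristic the datum on $\{w_2>\psi_1-h\}$ is $a(x,D\psi_2-Dh)$, not $a(x,D\psi_1-Dh)$. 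Neither affects the argument.
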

	
	\begin{proof}
	We take $w_3 - w_2 \in W_0^{1,G}(\Omega_5)$ as a test function in \eqref{w3} to get
	\begin{align}\label{aw3}
		\int_{\Omega_5} a(x, Dw_3) \cdot D(w_3 - w_2) dx &= \int_{\Omega_5} a(x, D\psi_1 - Dh) \cdot D(w_3 - w_2) dx 
	\end{align}
	Using \eqref{u1}, \eqref{a(x)4}, Lemma \ref{gyoung} and Lemma \ref{vg} to obtain
	\begin{align*}
		\fint_{\Omega_5} G(|Dw_3|) dx &\le c \fint_{\Omega_5} a(x, Dw_3) \cdot Dw_3 dx \\
		&\le c \fint_{\Omega_5} a(x, Dw_3) \cdot Dw_2 dx + c \fint_{\Omega_5} a(x, D\psi_1 - Dh) \cdot (Dw_3 - Dw_2) dx \\
		&\le c(\varepsilon_1 + \varepsilon_2) \fint_{\Omega_5} G(|Dw_3|) dx + c(\varepsilon_1, \varepsilon_2) \fint_{\Omega_5}\left[  G(|Dw_2|) + G(|D\psi_1|) + G(|Dh|)\right]  dx
	\end{align*}
	Now we choose $\varepsilon_1, \varepsilon_2$ small such that $c(\varepsilon_1 + \varepsilon_2) \le \frac{1}{2}$.
	Then by \eqref{ass2} and \eqref{w1w2}, we obtain the first estimate in \eqref{w2w3}.
	
	Next, we prove the second estimate in \eqref{w2w3}.
	Because of Lemma  3.2  in \cite{blo}, $w_3 \ge \psi_1 - h$ a.e. in $\Omega_5$.
	So taking $v = w_3$ in \eqref{w2}, combining with \eqref{aw3} to obtain
	\begin{align*}
		\fint_{\Omega_5} |V(Dw_2) - V(Dw_3)|^2 dx &\le c \fint_{\Omega_5} \left[ a(x, Dw_2) - a(x, Dw_3)\right]  \cdot (Dw_2 - Dw_3) dx \\
		&\le c \fint_{\Omega_5} [a(x, D\psi_2 - Dh) - a(x, D\psi_1 - Dh)] \cdot D(w_2 - w_3) dx \\
		&\le c(\varepsilon_3, \varepsilon_4) \fint_{\Omega_5} [G(|D\psi_1|) + G(|D\psi_2|) + G(|Dh|)] dx \\
		&\quad + c(\varepsilon_3 + \varepsilon_4) \fint_{\Omega_5} [G(|Dw_2|) + G(|Dw_3|)] dx
	\end{align*}
	we first choose $\varepsilon_3, \varepsilon_4$ small enough such that
	\begin{align*}
		c(\varepsilon_3 + \varepsilon_4) \fint_{\Omega_5} \left[ G(|Dw_2|) + G(|Dw_3|)\right]  dx \le \frac{\varepsilon}{2}
	\end{align*}
	Then we choose $\delta$ small enough so that
	\begin{align*}
		c(\varepsilon_3, \varepsilon_4) \fint_{\Omega_5} [G(|D\psi_1|) + G(|D\psi_2|) + G(|Dh|)] dx \le \frac{\varepsilon}{2}.
	\end{align*}
		Therefore, this completes the proof.
	
	\end{proof}
	
The remaining comparison estimates have been proved in \cite{cho1}, and we present the results for completeness. We first consider the interior case for $B_5\subseteq \Omega$.

	\begin{lemma}\label{w3w4l}
	Assume that \eqref{u1}-\eqref{u3} are satisfied and   $w_3 \in w_2+ W_0^{1,G}(\Omega_5)$  is a weak solution of \eqref{w3}. 
	For any $\varepsilon > 0$ there exists a small $\delta=\delta(n,l,L,i_g,s_g,\varepsilon) \in (0,\frac{1}{8})$ such that if \eqref{ass1}  and \eqref{ass2}   holds for such $\delta$ and  $w_4 \in W^{1,G}(B_4)$ is a weak solution of 
	\begin{equation*} 
		\left\{\begin{array}{r@{\ \ }c@{\ \ }ll}
			-\operatorname{div}\bar{a}_{B_4}(Dw_4) &=&0 \ \ &\mbox{in}\ \   B_4 \,,  \\[0.05cm]
			w_4 &=&w_3 \ \ &\mbox{on}\ \ \partial B_4 \,.
		\end{array}\right.
	\end{equation*}
	Then
	\begin{equation*} 
		\|G(|Dw_4|)\|_{L^{\infty}(B_2)} \leq c \quad \text{and} \quad \fint_{B_4} |V(Dw_3)-V(Dw_4)|^2\,dx \leq \varepsilon.  
	\end{equation*}
	Here, $c > 0$ depends on $n,l,L,i_g$ and $s_g$, but is independent of $\varepsilon$.
\end{lemma}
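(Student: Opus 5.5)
Since $B_5\subseteq\Omega$ here, $\Omega_5=B_5$. The plan is the standard freezing-coefficients comparison: first get a uniform energy bound for $w_3$ on $B_4$, then a higher-integrability bound, then compare $w_3$ with $w_4$ using the small BMO oscillation \eqref{ass1}, and finally use the regularity theory for the frozen operator $\bar a_{B_4}$.

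\textbf{Energy bound for $w_3$.} By Lemma \ref{w2w3l}, $\fint_{B_5}G(|Dw_3|)\,dx\le c$. Hence $\fint_{B_4}G(|Dw_3|)\,dx\le c$.

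\textbf{Energy bound for $w_4$.} Note that $\bar a_{B_4}$ satisfies \eqref{u1} with the same constants, since \eqref{u1} is preserved under averaging in $x$. Test the equation for $w_4$ with $w_4-w_3\in W^{1,G}_0(B_4)$. Arguing as in Lemma \ref{w2w3l}, using Lemma \ref{gyoung}, \eqref{a(x)4} and Lemma \ref{vg}, we get
\[
\fint_{B_4}G(|Dw_4|)\,dx\le c\fint_{B_4}G(|Dw_3|)\,dx\le c .
\]

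\textbf{Higher integrability.} I would use a Gehring-type reverse Hölder inequality for $w_3$: there is $\sigma_0=\sigma_0(n,l,L,i_g,s_g)>0$ such that
\[
\fint_{B_4}G(|Dw_3|)^{1+\sigma_0}\,dx\le c\Bigl(\fint_{B_5}G(|Dw_3|)\,dx\Bigr)^{1+\sigma_0}+c\fint_{B_5}\bigl[G(|D\psi_1|)+G(|Dh|)\bigr]^{1+\sigma_0}dx .
\]
This step is the main obstacle. The right-hand side involves $G(|D\psi_1-Dh|)$ to a power $1+\sigma_0$, and \eqref{ass2} only controls its $L^1$ average. I see two options. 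The first is to carry the smallness assumption at the level $1+\sigma_0$. The second, which I would try first, is to avoid it: add an intermediate homogeneous problem $-\operatorname{div}a(x,D\tilde w)=0$ in $B_5$ with $\tilde w=w_3$ on $\partial B_5$. Its comparison to $w_3$ costs $\varepsilon$ exactly as in Lemma \ref{w2w3l}, through $\fint G(|D\psi_1|)+G(|Dh|)\le\delta$. The homogeneous $\tilde w$ then has interior higher integrability with no data term. This is exactly the structure of \cite{cho1}, so I would then quote it.

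\textbf{Comparison $w_3$ (or $\tilde w$) versus $w_4$.} Test both equations with $\tilde w-w_4$ and subtract. By Lemma \ref{vg},
\[
\fint_{B_4}|V(D\tilde w)-V(Dw_4)|^2\,dx\le c\fint_{B_4}\bigl|a(x,D\tilde w)-\bar a_{B_4}(D\tilde w)\bigr|\,|D\tilde w-Dw_4|\,dx .
\]
Bound $|a(x,D\tilde w)-\bar a_{B_4}(D\tilde w)|\le\theta(a,B_4)(x)\,g(|D\tilde w|)$ and apply Young's inequality (Lemma \ref{gyoung}). The Young parameter $\tau$ small absorbs the term with $G(|D\tilde w-Dw_4|)$, leaving $c(\tau)\fint_{B_4}\theta\,G(|D\tilde w|)\,dx$. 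Since $\theta\le 2L$, Hölder's inequality with exponents $(1+\sigma_0)/\sigma_0$ and $1+\sigma_0$ gives
\[
c(\tau)\,\Bigl(\fint_{B_4}\theta\,dx\Bigr)^{\sigma_0/(1+\sigma_0)}\Bigl(\fint_{B_4}G(|D\tilde w|)^{1+\sigma_0}dx\Bigr)^{1/(1+\sigma_0)}\le c(\tau)\,\delta^{\sigma_0/(1+\sigma_0)}.
\]
Choosing $\tau$ and then $\delta$ small makes this less than $\varepsilon/2$. Combining with the $\tilde w$ versus $w_3$ comparison, and using $|a+b|^2\le2|a|^2+2|b|^2$, gives $\fint_{B_4}|V(Dw_3)-V(Dw_4)|^2\le\varepsilon$.

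\textbf{$L^\infty$ bound for $w_4$.} Since $\bar a_{B_4}$ is $x$-independent and satisfies \eqref{u1}, the Lieberman-type Lipschitz estimate for Orlicz-growth equations gives
\[
\sup_{B_2}G(|Dw_4|)\le c\fint_{B_4}G(|Dw_4|)\,dx\le c .
\]
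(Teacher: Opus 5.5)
There is a genuine gap in your comparison step. The lemma prescribes $w_4=w_3$ on $\partial B_4$, but your auxiliary function satisfies $\tilde w=w_3$ only on $\partial B_5$. So $\tilde w-w_4$ in general has nonzero trace on $\partial B_4$. It is therefore not in $W^{1,G}_0(B_4)$ and cannot be tested in the equation for $w_4$. Its zero extension is not in $W^{1,G}_0(B_5)$ either, so it cannot be tested in the equation for $\tilde w$.

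Posing $\tilde w$ on $B_4$ with datum $w_3$ would make the test admissible. But then your H\"older step needs $\fint_{B_4}G(|D\tilde w|)^{1+\sigma_0}\,dx$ up to $\partial B_4$, i.e.\ global higher integrability. That in turn requires higher integrability of the datum $w_3$, which is exactly what you lack. Comparing $w_3$ with $w_4$ directly (the other reading of ``$w_3$ (or $\tilde w$)'') leaves $\fint_{B_4}\theta\,G(|Dw_3|)\,dx$ on the right, and your argument does not control it.

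The paper gives no proof here and only cites \cite{cho1}. There the frozen problem takes the boundary values of a solution of a homogeneous equation; you were right that $w_3$ is not such a solution.

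To close the gap, use $\tilde w$ only as an auxiliary. Test with $w_3-w_4\in W^{1,G}_0(B_4)$, which is admissible in both equations. This gives
\[
\fint_{B_4}|V(Dw_3)-V(Dw_4)|^2\,dx\le c\tau+c(\tau)\fint_{B_4}\theta(a,B_4)\,G(|Dw_3|)\,dx+c(\tau)\,\delta ,
\]
where the last term comes from the right-hand side $a(x,D\psi_1-Dh)$ and \eqref{ass2}. Then split $G(|Dw_3|)\le c\,G(|D\tilde w|)+c\,G(|Dw_3-D\tilde w|)$.
\begin{itemize}
\item The first piece is handled by your H\"older argument, using interior higher integrability of $\tilde w$ on $B_4\Subset B_5$.
\item The second piece is handled by $\theta\le 2L$ together with smallness of $\fint_{B_5}G(|Dw_3-D\tilde w|)\,dx$.
\end{itemize}
That smallness does not follow from \eqref{cv}, which gives none. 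You need the sharper inequality $G(|\xi-\eta|)\le c(\kappa)|V(\xi)-V(\eta)|^2+\kappa\,G(|\xi|)$, valid for every $\kappa\in(0,1)$. Apply it with $\xi=Dw_3$, $\eta=D\tilde w$, and choose $\kappa$, then $\tau$, then $\delta$.

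Alternatively, if you give $w_4$ the datum $\tilde w$ on $\partial B_4$, your argument works verbatim and suffices for Lemma \ref{lem5.1}. However, that proves a variant, not the lemma as stated.

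A minor point: $\tau\fint G(|D\tilde w-Dw_4|)\,dx$ is not absorbed by the $V$-term, since it is not dominated by it in the subquadratic range. It is simply bounded by $c\tau$ via the energy estimates. Your energy bounds and the Lieberman $L^\infty$ estimate are fine.
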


Next, we consider the boundary case for $B_5\nsubseteq \Omega$. 

		\begin{lemma}\label{w3w4b}
		Assume that \eqref{u1}-\eqref{u3} are satisfied and   $w_3 \in w_2+ W_0^{1,G}(\Omega_5)$  is a weak solution of \eqref{w3}. 
		For any $\varepsilon > 0$ there exists a small $\delta=\delta(n,l,L,i_g,s_g,\varepsilon) \in (0,\frac{1}{8})$ such that if \eqref{ass1},  \eqref{ass2} and \eqref{ass3} holds for such $\delta$ and  $w_4 \in W^{1,G}(B_4^+)$ is a weak solution of 
		\begin{equation*} 
			\left\{\begin{array}{r@{\ \ }c@{\ \ }ll}
				-\operatorname{div}\bar{a}_{B_4^+}(Dw_4) &=&0 \ \ &\mbox{in}\ \   B_4^+ \,,  \\[0.05cm]
				w_4 &=&0 \ \ &\mbox{on}\ \   T_4 \,.
			\end{array}\right.
		\end{equation*}
		Then
		\begin{equation*} 
			\|G(|D\bar{w}_4|)\|_{L^{\infty}(\Omega_2)} \leq c \quad \text{and} \quad \fint_{\Omega_3} |V(Dw_3)-V(D\bar{w}_4)|^2\,dx \leq \varepsilon,  
		\end{equation*}
	where $\bar{w}_4$ is the zero extension of $w_4$ from $B_4^+$ to $B_4$ and $c > 0$ depends on $n,l,L,i_g$ and $s_g$, but is independent of $\varepsilon$.
	\end{lemma}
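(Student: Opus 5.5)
This boundary comparison is a flattening-plus-compactness argument, following \cite{cho1} (and \cite{bw,bc} in the $p$-growth case). The function $w_3$ solves a plain equation, $-\operatorname{div}a(x,Dw_3)=-\operatorname{div}a(x,D\psi_1-Dh)$ in $\Omega_5$. Its boundary values come from $w_2$, hence ultimately from $u-h$. We must therefore first read the statement with the zero-trace interpretation: on $\partial\Omega\cap B_5$ the function $u-h$ vanishes, so $w_3\in W^{1,G}$ has zero trace on $\partial\Omega\cap B_5$. We then extend $w_3$ by zero outside $\Omega_5$ to $B_5$.

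\textbf{Step 1: reduce to a homogeneous equation with frozen coefficients.} Let $w$ be the weak solution of $-\operatorname{div}\bar a_{B_4}(Dw)=0$ in $\Omega_4$ with $w=w_3$ on $\partial\Omega_4$. Testing both equations with $w_3-w$ and using Lemma \ref{vg} gives
\[
\fint_{\Omega_4}|V(Dw_3)-V(Dw)|^2\le c\fint_{\Omega_4}\bigl|a(x,Dw)-\bar a_{B_4}(Dw)\bigr|\,|Dw_3-Dw|+c\fint_{\Omega_4}|a(x,D\psi_1-Dh)|\,|Dw_3-Dw| .
\]
The second integral is handled by Lemma \ref{gyoung}, \eqref{a(x)4} and \eqref{ass2}. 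It is bounded by $\varepsilon'\fint G(|Dw_3-Dw|)+c(\varepsilon')\delta$. For the first, bound the integrand by $\theta(a,B_4)\,g(|Dw|)\,|Dw_3-Dw|$. We then need a higher-integrability (reverse Hölder / Gehring) estimate for $G(|Dw|)$ up to the Reifenberg-flat boundary. This holds because the Reifenberg-flat $\Omega$ satisfies a uniform exterior measure density condition. Combining it with Hölder, $\theta\le 2L$, and \eqref{ass1} yields a bound $c\,\delta^{\sigma}$ for some $\sigma>0$. Choosing $\delta$ small then gives the comparison between $w_3$ and $w$.

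\textbf{Step 2: compare $w$ with the half-ball solution $w_4$.} This is the main obstacle. By \eqref{ass3}, $\Omega_4$ differs from $B_4^+$ only by a slab of width $O(\delta)$. I would argue by contradiction and compactness. Suppose that for some $\varepsilon_0$ there are sequences $\delta_k\to0$, operators $\bar a_k$ (with constant coefficients, satisfying \eqref{u1}), domains $\Omega^k$, and solutions $w^k$ with $\fint G(|Dw^k|)\le c$, such that no $w_4$ on $B_4^+$ with zero trace on $T_4$ is $\varepsilon_0$-close in the $V$-distance. Uniform bounds plus reflexivity of $W^{1,G}$ (since $G\in\Delta_2\cap\nabla_2$) give a limit $w_\infty$ on $B_4^+$ vanishing on $T_4$. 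The limit solves $-\operatorname{div}\bar a_\infty(Dw_\infty)=0$; to pass to the limit in the nonlinearity one uses Minty's trick or the monotonicity of $\bar a_k$. The delicate point is upgrading weak to strong convergence of $V(Dw^k)$. For this I would test with $(w^k-w_\infty)\eta$, using the boundary higher integrability to control the thin slab $\Omega^k_4\setminus B_4^+$, whose measure is $O(\delta_k)$. Taking $w_4=w_\infty$ then gives the contradiction on $\Omega_3$, where the cutoff avoids the lateral boundary.

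\textbf{Step 3: the $L^\infty$ bound.} For $w_4$ with frozen coefficients, zero on the flat boundary $T_4$, we use odd reflection across $T_4$ combined with the known Lipschitz regularity for Orlicz-growth operators (Lieberman-type estimates). This gives $\sup_{B_2^+}G(|Dw_4|)\le c\fint_{B_4^+}G(|Dw_4|)\le c$. The bound transfers to $\bar w_4$ on $\Omega_2$ because $D\bar w_4=0$ off $B_4^+$. Finally, the triangle inequality for $V$ together with \eqref{cv} combines Steps 1–2 into $\fint_{\Omega_3}|V(Dw_3)-V(D\bar w_4)|^2\le\varepsilon$.
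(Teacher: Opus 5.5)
The paper does not prove this lemma; it quotes it from \cite{cho1}. Your reconstruction has a genuine gap in Step~1. You compare $w_3$ directly with the frozen-coefficient solution $w$ on $\Omega_4$, whose datum is $w_3$ on all of $\partial\Omega_4$. The oscillation term is then $\int_{\Omega_4}\theta\,g(|Dw|)\,|Dw_3-Dw|\,dx$, and making it small requires $G(|Dw|)\in L^{1+\sigma}(\Omega_4)$ with a uniform bound. The exterior measure density of $\Omega$ gives reverse H\"older inequalities only in the interior and near $\partial\Omega\cap B_4$. Near the lateral boundary $\partial B_4\cap\Omega$, where $w=w_3$, higher integrability of $Dw$ would need $G(|Dw_3|)\in L^{1+\sigma}$. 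That fails in general: $w_3$ solves an equation with right-hand side $a(x,D\psi_1-Dh)$, and $D\psi_1,Dh$ are merely in $L^G$. Switching to the monotonicity of $\bar a$ only changes the integrand to $\theta\,g(|Dw_3|)\,|Dw_3-Dw|$, which fails for the same reason. Without higher integrability, $\fint\theta\le c\delta$ gives no smallness, since $G(|Dw|)$ may concentrate where $\theta$ is large.

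The missing idea is to keep the two comparisons separate, as in \cite{bw}:
\begin{itemize}
\item First let $\tilde w$ solve $-\operatorname{div}a(x,D\tilde w)=0$ in $\Omega_5$ with $\tilde w=w_3$ on $\partial\Omega_5$. This costs only $\fint_{\Omega_5}[G(|D\psi_1|)+G(|Dh|)]\,dx\le\delta$.
\item $\tilde w$ has higher integrability on $\Omega_4$, because $B_4$ is at distance $1$ from $\partial B_5$, $\tilde w=0$ on $\partial\Omega\cap B_5$, and the measure density condition holds.
\item Only then freeze: solve $-\operatorname{div}\bar a_{B_4^+}(Dv)=0$ in $\Omega_4$ with $v=\tilde w$ on $\partial\Omega_4$, and use the monotonicity of $\bar a_{B_4^+}$. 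The error term becomes $\theta\,g(|D\tilde w|)\,|D\tilde w-Dv|$, which higher integrability of $D\tilde w$ controls.
\end{itemize}
You froze with $\bar a_{B_4}$ while the statement uses $\bar a_{B_4^+}$. By \eqref{ass1} these differ by at most $2\delta\, g(|\eta|)$, so this is only bookkeeping.

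Steps~2 and~3 also need repair. Since $\delta$ may depend only on $n,l,L,i_g,s_g,\varepsilon$, the contradiction hypothesis must be that no solution of the $k$-th half-ball problem ($-\operatorname{div}\bar a_k(Dv)=0$ in $B_4^+$, $v=0$ on $T_4$) is $\varepsilon_0$-close to $w^k$. Your $w_\infty$ solves the equation for a limit operator $\bar a_\infty$, which must itself be produced, e.g.\ by Arzel\`a--Ascoli from \eqref{u1}. So ``taking $w_4=w_\infty$'' is not admissible for any $k$. You need one more step: let $v^k$ solve $-\operatorname{div}\bar a_k(Dv^k)=0$ in $B_4^+$ with $v^k=w_\infty$ on $\partial B_4^+$. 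Then show $V(Dv^k)\to V(Dw_\infty)$ in $L^2(B_4^+)$, by testing with $v^k-w_\infty$ and using $\bar a_k(Dw_\infty)\to\bar a_\infty(Dw_\infty)$ in $L^{G^*}$.

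In Step~3, odd reflection does not preserve the structure for a general $\bar a$. With $P=\mathrm{diag}(1,\dots,1,-1)$, the function $-w_4(x',-x_n)$ solves in $B_4^-$ the equation with operator $\eta\mapsto-P\bar a(-P\eta)$. This coincides with $\bar a$ only under a symmetry such as $\bar a(\eta)=g(|\eta|)\eta/|\eta|$. Otherwise the reflected operator jumps across $T_4$, and Lipschitz theory for $x$-independent operators does not apply. The bound $\sup_{B_2^+}G(|Dw_4|)\le c$ must instead come from boundary $C^{1,\alpha}$ (Lieberman-type) estimates for $x$-independent Orlicz-growth operators with zero data on a flat boundary portion.
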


\section{Proof of main theorem } 
 
 In this section, we establish the global weighted Orlicz estimate using the comparison estimates in the previous section and a Vitali type covering lemma.
 
 \begin{lemma}\label{lem5.1}
 	There exists a constant $C = C(n,l,L,i_g, s_g ) > 1$ such that for any $\varepsilon > 0$, there exists $\delta = \delta(\varepsilon,n,l,L,i_g, s_g) \in (0, \frac{1}{8})$ ensuring that if $(a, \Omega)$ satisfies the $(\delta, R)$-vanishing condition, then the following holds. For any $y \in \Omega$, $r \in (0, \frac{R}{40})$, and $\lambda \geq 1$, provided that the set 
 	\begin{align}\label{ml}
 		&\left\{
 		\begin{aligned}
 			x\in\Omega_r(y):\quad
 			&\mathcal{M}\bigl(G(|Du|)\bigr)(x)\leq\lambda,\quad
 			\mathcal{M}\bigl(G(|F|)\bigr)(x)\leq\frac{\lambda\delta}{4},\quad
 			\mathcal{M}\bigl(G(|D\psi_1|)\bigr)(x)\leq\frac{\lambda\delta}{4},
 			\\
 			&\mathcal{M}\bigl(G(|D\psi_2|)\bigr)(x)\leq\frac{\lambda\delta}{4},\quad
 			\mathcal{M}\bigl(G(|Dh|)\bigr)(x)\leq\frac{\lambda\delta}{4}
 		\end{aligned}
 		\right\}
 		\neq\varnothing,
 	\end{align}
 	then
 	\begin{align}
 		|\{x \in \Omega_r(y) : \mathcal{M}(G(|Du|)) > \lambda C\}| < \varepsilon |B_r(y)|.
 	\end{align}
 \end{lemma}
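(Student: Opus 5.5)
The plan is the standard rescaling-and-comparison argument. Fix $\varepsilon>0$, $y\in\Omega$, $r\in(0,R/40)$, $\lambda\ge1$, and pick a point $x_0\in\Omega_r(y)$ in the set \eqref{ml}. Since $B_{5r}(y)\subset B_{6r}(x_0)$, the maximal function bounds at $x_0$ give
\[
\fint_{\Omega_{5r}(y)}G(|Du|)\,dx\le c_n\lambda,
\]
and similarly the averages of $G(|F|),G(|D\psi_i|),G(|Dh|)$ over $\Omega_{5r}(y)$ are at most $c_n\lambda\delta$. Here we use the zero extension and $|\Omega_{5r}(y)|\ge c|B_{5r}|$, which holds by Reifenberg flatness.

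We normalize by scaling in both the variable and the function. Set $\lambda_0=G^{-1}(c_n\lambda)$ and define
\[
\tilde u(x)=\frac{u(y+rx)}{r\lambda_0},\quad \tilde a(x,\eta)=\frac{a(y+rx,\lambda_0\eta)}{g(\lambda_0)},\quad \tilde G(t)=\frac{G(\lambda_0t)}{G(\lambda_0)},
\]
with $\tilde F,\tilde\psi_i,\tilde h$ defined analogously. Then $\tilde g(t)=\lambda_0 g(\lambda_0t)/G(\lambda_0)$ has the same indices $i_g,s_g$, and $\tilde a$ satisfies \eqref{u1} with $\tilde g$ and constants depending only on $l,L,i_g,s_g$ (since $G(\lambda_0)\approx\lambda_0g(\lambda_0)$). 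The oscillation $\theta$ is invariant, the rescaled domain is $(\delta,R/r)$-Reifenberg flat with $R/r>40$, and $\tilde u$ solves the rescaled double obstacle problem. So \eqref{ass1} and \eqref{ass2} hold for the rescaled data, where $\delta$ may be replaced by a fixed multiple of $\delta$ absorbed in the choice. If $B_5\subset\tilde\Omega$, we are in the interior case. Otherwise there is a boundary point within distance $1$ of the origin. After translating to it and rotating, with a slight enlargement of radii and Remark \ref{rei}, we get \eqref{ass3}. This recentring is routine but needs care with the radii, and I would follow \cite{cho1} for it.

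Next, apply Lemmas \ref{uw1}, \ref{w1w2l}, \ref{w2w3l} and then Lemma \ref{w3w4l} or \ref{w3w4b} with a small $\eta$ in place of $\varepsilon$. Note that $u-w_1$ differs from $Dh$ only through the shift, so I compare $Du$ with $D(w_1+h)$ rather than $Dw_1$. Summing the $V$-differences gives
\[
\fint_{\Omega_3}|V(D\tilde u)-V(D\bar w)|^2\,dx\le c\eta+c\delta,\qquad \|G(|D\bar w|)\|_{L^\infty(\Omega_2)}\le c_0 .
\]
The $Dh$ part is controlled because $\fint G(|D\tilde h|)\le\delta$, using \eqref{cv}, which yields $G(|\xi|)\le c_v[|V(\xi)-V(\eta)|^2+G(|\eta|)]$. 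Put $C=\max\{2c_vc_0,c_n\}\cdot c_n'$.

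Finally, for $x\in\Omega_1$ with $\mathcal M(G(|D\tilde u|))(x)>C$, the supremum only needs radii below $1$; larger radii are controlled by the normalization bound $c_n$ times dilation constants. Hence
\[
\mathcal M\bigl(\chi_{\Omega_2}G(|D\tilde u-D\bar w|)\bigr)(x)>C/2 .
\]
By \eqref{cv} together with the $L^\infty$ bound, and then the weak $(1,1)$ estimate \eqref{one},
\[
|\{\cdots\}\cap\Omega_1|\le\frac{c}{C}\int_{\Omega_2}\bigl[|V(D\tilde u)-V(D\bar w)|^2+G(|D\tilde h|)\bigr]\le c(\eta+\delta).
\]
Choosing $\eta$ and then $\delta$ small makes this less than $\varepsilon|B_1|$. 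Scaling back gives the claim. The main obstacle is the passage from $G(|Du-D\bar w|)$ to $V$-differences, because \eqref{cv} only provides this with an additive $G(|D\bar w|)$ term. This is why the threshold $C$ must dominate $c_v c_0$. I would handle it by splitting the level set as $G(|Du|)\le 2^{s_g}[G(|Du-D\bar w|)+G(|D\bar w|)]$ and absorbing the bounded part into $C$.
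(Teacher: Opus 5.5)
\textbf{There is a genuine gap in your final density estimate.} Your rescaling (normalizing by $\lambda_0=G^{-1}(c_n\lambda)$ instead of the paper's $\tilde g(t)=g(2^n\lambda t)$) is fine, and so is chaining Lemmas \ref{uw1}--\ref{w3w4b}. The problem is the last step.

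You pass to the level set of $\mathcal M(\chi_{\Omega_2}G(|D\tilde u-D\bar w|))$ and apply the weak $(1,1)$ estimate to $G(|D\tilde u-D\bar w|)$. For this you need $\int_{\Omega_2}G(|D\tilde u-D\bar w|)\,dx$ to be small. But \eqref{cv} only gives
\[
\int_{\Omega_2}G(|D\tilde u-D\bar w|)\,dx\le c_v\int_{\Omega_2}|V(D\tilde u)-V(D\bar w)|^2\,dx+c_v\int_{\Omega_2}G(|D\bar w|)\,dx ,
\]
and the last term is of order $c_0|B_2|$, not small. In your display it has silently become $\int G(|D\tilde h|)$.

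This term cannot simply be dropped. For subquadratic growth, e.g.\ $g(t)=t^{p-1}$ with $1<p<2$, one has $|V(\xi)-V(\eta)|^2\approx(|\xi|+|\eta|)^{p-2}|\xi-\eta|^2\ll|\xi-\eta|^p$ when $|\xi-\eta|\ll|\eta|$.

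Your remedy, splitting $G(|D\tilde u|)\le c[G(|D\tilde u-D\bar w|)+G(|D\bar w|)]$ and absorbing $c_0$ into $C$, only fixes the threshold of the maximal function. It does not fix the integral on the right of the weak-type bound. You also cannot let $C$ grow like $1/\varepsilon$, because $C$ must be independent of $\varepsilon$; this is used later in the condition $\varepsilon_1C^{I(\Phi)-\varepsilon_0}<1$.

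\textbf{The fix, which is what the paper does, is to never pass through $G(|D\tilde u-D\bar w|)$.}
\begin{enumerate}
\item Pointwise, $G(|\xi|)\le c|V(\xi)|^2\le c\bigl(|V(\xi)-V(\eta)|^2+G(|\eta|)\bigr)$.
\item Hence for $x\in\Omega_1$ and $\rho<1$, $\fint_{B_\rho(x)}G(|D\tilde u|)\le c\fint_{B_\rho(x)}|V(D\tilde u)-V(D\bar w)|^2+c\,c_0$.
\item Take $C\ge c(1+c_0)$, together with the dilation constant that handles $\rho\ge1$. Then $\{\mathcal M(G(|D\tilde u|))>C\}\cap\Omega_1\subset\{\mathcal M(\chi_{\Omega_2}|V(D\tilde u)-V(D\bar w)|^2)>1\}$.
\item Apply the weak $(1,1)$ estimate to $|V(D\tilde u)-V(D\bar w)|^2$, whose integral is $\le c(\eta+\delta)$.
\end{enumerate}
Alternatively, use the sharper shifted inequality $G(|\xi-\eta|)\le\kappa\,G(|\eta|)+c_\kappa|V(\xi)-V(\eta)|^2$, with $\kappa$ chosen in terms of $\varepsilon$ rather than $C$.

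\textbf{Two smaller points.}
\begin{enumerate}
\item The detour through $D(w_1+h)$ is unnecessary. Lemma \ref{uw1} already compares $V(Du)$ with $V(Dw_1)$ directly. Moreover, \eqref{cv} would not make $|V(Dw_1+Dh)-V(Dw_1)|^2$ small: it bounds $G$ of a difference by a $V$-difference, not the other way round.
\item When $B_5\not\subset\tilde\Omega$, the boundary point is only within distance $5$ of the origin, so the enlargement is not slight. The paper centres at the boundary point, works on $\Omega_{35r}$ and rescales by $7r$; this is where $r<R/40$ is used.
\end{enumerate}
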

 
 \begin{proof}
 	By our assumption in \eqref{ml}, there exists a point $y_1 \in \Omega_r(y)$ such that for every $\rho > 0$, we have
  \begin{equation}\label{ml2}
  	\left\{
  	\begin{aligned}
  		\frac{1}{|B_\rho(y_1)|}
  		\int_{\Omega_\rho(y_1)}G(|Du|)\,dx
  		&\leq \lambda, \\[0.15cm]
  		\frac{1}{|B_\rho(y_1)|}
  		\int_{\Omega_\rho(y_1)}G(|F|)\,dx
  		&\leq \frac{\lambda\delta}{4}, \\[0.15cm]
  		\frac{1}{|B_\rho(y_1)|}
  		\int_{\Omega_\rho(y_1)}G(|D\psi_i|)\,dx
  		&\leq \frac{\lambda\delta}{4},	\qquad i=1,2,  \\[0.15cm]
  		\frac{1}{|B_\rho(y_1)|}
  		\int_{\Omega_\rho(y_1)}G(|Dh|)\,dx
  		&\leq \frac{\lambda\delta}{4}.
  	\end{aligned}
  	\right.
  \end{equation}
 	Let us first consider the case where $B_{5r}(y) \subset \Omega$. This geometric condition ensures the inclusion $B_{5r}(y) \subset \Omega_{6r}(y_1)$. Combining this fact with the estimates in \eqref{ml2} yields
 	\begin{equation}\label{ml3}
 		\left\{
 		\begin{aligned}
 				\fint_{B_{5r}(y)} G(|Du|) \, dx  &\leq 2^n \lambda,  \\[0.15cm]
 		 \fint_{B_{5r}(y)} G(|F|) \, dx &\leq \frac{ 2^n \lambda \delta}{4}, \\[0.15cm]
 		  \fint_{B_{5r}(y)} G(|D\psi_i|) \, dx &\leq \frac{ 2^n \lambda \delta}{4},	\qquad i=1,2,  \\[0.15cm]
 		\fint_{B_{5r}(y)} G(|Dh|) \, dx &\leq \frac{ 2^n \lambda \delta}{4}.
 		\end{aligned}
 		\right.
 	\end{equation}
 	To proceed, we introduce the following rescaling argument.
 		\begin{equation*} 
 		\left\{
 		\begin{aligned}
 			 	&\tilde{u}(x) = \frac{u(y + rx)}{2^n \lambda r}, \quad \tilde{F}(x) = \frac{F(y + rx)}{2^n \lambda},  \quad \tilde{\psi_i}(x) = \frac{\psi_i(y + rx)}{2^n \lambda r}, \quad  i=1,2, \\[0.15cm]
 			 	&\tilde{a}(x,\eta) = a(y + rx,2^n \lambda \eta), \quad 	\tilde{b}(x,\eta) = b(y + rx,2^n \lambda \eta), \quad \tilde{h}(x) = \frac{h(y + rx)}{2^n \lambda r},   \\[0.15cm]
 		&\tilde{g}(t) = g(2^n \lambda t), \quad \tilde{G}(t) =\int_0^t \tilde{g}(s) ds, \quad t \geq 0, \\[0.15cm]
 		\end{aligned}
 		\right.
 	\end{equation*}
 	where
 	\[
 	x \in \tilde{\Omega} = \left\{ \frac{z - y}{r} : z \in \Omega \right\}. 
 	\]
 	Then $\tilde{a}, \tilde{b}$ and $\tilde{g}$ satisfy \eqref{u1}, \eqref{u2}, \eqref{u3}, and $\tilde{u}\in \tilde{\mathcal{A}}_{\tilde{h}}(\tilde{\Omega})$ solves the variational inequality
 	\begin{equation*} 
 		\int_{\tilde{\Omega}}  \tilde{a}(x, D \tilde{u}) \cdot D(v-\tilde{u}) \mathrm{d} x \geqslant \int_{\tilde{\Omega}} \tilde{b}(x, \tilde{F} ) \cdot D(v-\tilde{u}) \mathrm{d}x,  
 	\end{equation*}
 	where 
 	$$
 	\tilde{\mathcal{A}}_{\tilde{h}}(\tilde{\Omega})=\left\{\varphi \in W_{\tilde{h}}^{1, \tilde{G}}(\tilde{\Omega}): \tilde{\psi}_{1} \leq \varphi \leq \tilde{\psi}_{2} \text { a.e. in } \tilde{\Omega} \right\} .
 	$$
 	   Therefore, conditions \eqref{ass1} and \eqref{ass2} are satisfied, and $\frac{R}{r}>5$. By applying Lemmas \ref{uw1}, \ref{w1w2l}, \ref{w2w3l}, and \ref{w3w4l} and scaling back, we obtain a function $w_4 \in W^{1,G}(B_{4r}(y))$ such that
 	\begin{align}\label{ml4}
 			\fint_{B_{4r}(y)} |V(Du) - V(Dw_4)|^2 \, dx < 2^n \lambda \varepsilon_1 \quad \text{and} \quad \|G(|Dw_4|)\|_{L^\infty(B_{2r}(y))} \leq 2^n \lambda C_1, 
 	\end{align}
 	where $\varepsilon_1>0$ is to be chosen later, and $C_1>1$ is a constant depending only on $n,l,L,i_g,s_g$.
 	Set
 	\[
 	C\geq \max\{2^{n+1}c_vC_1,3^n\}
 	\]
 	for $c_v$ as in \eqref{cv}. We next claim the following inclusion:
 	\begin{align}\label{da}
 		\left\{x\in B_r(y):\mathcal{M}(G(|Du|))>\lambda C\right\}
 		\subset
 		\left\{x\in B_r(y):
 		\mathcal{M}_{B_{4r}(y)}
 		\left(|V(Du)-V(Dw_4)|^2\right)>2^n\lambda
 		\right\}.
 	\end{align}
 To this end, let us take a point
 	\[
 	x_1\in
 	\left\{
 	x\in B_r(y):
 	\mathcal{M}_{B_{4r}(y)}
 	\left(|V(Du)-V(Dw_4)|^2\right)
 	\leq 2^n\lambda
 	\right\}.
 	\]
 First, we consider the case where $0<\rho<r$. Then $B_\rho(x_1)\subset B_{2r}(y)$, and combining \eqref{cv} with \eqref{ml4} yields
 	\begin{align*}
 		\frac{1}{|B_\rho(x_1)|}
 		\int_{\Omega_\rho(x_1)}G(|Du|)\,dx
 		&\leq
 		c_v
 		\fint_{B_\rho(x_1)}
 		\left(
 		|V(Du)-V(Dw_4)|^2
 		+
 		G(|Dw_4|)
 		\right)dx
 		\nonumber\\
 		&\leq
 		c_v(2^n\lambda+2^n\lambda C_1)
 		\leq \lambda C.
 	\end{align*}
  Next, we consider the case where $\rho\geq r$. In this situation, the geometric inclusion $B_\rho(x_1)\subset B_{3\rho}(y_1)$ combined with \eqref{ml2} implies that
 	\begin{align*}
 		\frac{1}{|B_\rho(x_1)|}
 		\int_{\Omega_\rho(x_1)}G(|Du|)\,dx
 		&\leq
 		\frac{|B_{3\rho}(y_1)|}{|B_\rho(x_1)|}
 		\frac{1}{|B_{3\rho}(y_1)|}
 		\int_{\Omega_{3\rho}(y_1)}G(|Du|)\,dx
 		\nonumber\\
 		&\leq 3^n\lambda
 		\leq \lambda C .
 	\end{align*}
 	 Consequently, we obtain
 	\begin{align*}
 		x_1\in
 		\left\{
 		x\in B_r(y):
 		\mathcal{M}(G(|Du|))\leq \lambda C
 		\right\},
 	\end{align*}
which establishes the claimed inclusion \eqref{da}.
 	
 Now, by virtue of \eqref{one}, \eqref{ml4}, and \eqref{da}, we can estimate
 	\begin{align*}
 		\left|
 		\left\{
 		x\in B_r(y):
 		\mathcal{M}(G(|Du|))>\lambda C
 		\right\}
 		\right|
 		&\leq
 		\left|
 		\left\{
 		x\in B_r(y):
 		\mathcal{M}_{B_{3r}(y)}
 		(|V(Du)-V(Dw_4)|^2)>2^n\lambda
 		\right\}
 		\right|
 		\nonumber\\
 		&\leq
 		\frac{c|B_r(y)|}{2^n\lambda}
 		\fint_{B_{3r}(y)}
 		|V(Du)-V(Dw_4)|^2\,dx
 		\nonumber\\
 		&<
 		c\varepsilon_1|B_r(y)|.
 	\end{align*}
 Finally, the proof is completed by choosing $\varepsilon_1$ sufficiently small such that $c\varepsilon_1\leq \varepsilon$.

 	Next, we address the boundary case where $B_{5r}(y) \nsubseteq \Omega$, meaning there exists a point $\bar{x} \in B_{5r}(y) \cap \partial\Omega$. In light of Remark \ref{rei} and the assumption $0 < 40r < R$, we can select a local coordinate system $\{\xi_1, \ldots, \xi_n\}$ such that $\bar{x} = -40r\delta e_n$ and the following geometric inclusion holds:
 	\begin{align}\label{rei2}
 		B^+_{35r}\subset \Omega_{35r}
 		\subset
 		\{z\in B_{35r}:z_n>-80r\delta\}.
 	\end{align}
 	Within this coordinate system, we have $|y| \leq 10r$ and $|y_1| \leq 11r$. This implies the inclusions $\Omega_r(y) \subset \Omega_{11r}$ and $\Omega_{35r} \subset \Omega_{46r}(y_1)$. Consequently, it follows from \eqref{ml2} that
 		\begin{equation}\label{rei3}
 		\left\{
 		\begin{aligned}
 			 	\fint_{\Omega_{35r}}G(|Du|)\,dz
 			 &\leq 2^{n+1}\lambda, \\[0.15cm]
 		 \fint_{\Omega_{35r}}G(|F|)\,dz
 		 &\leq \frac{2^{n+1}\lambda\delta}{4}, \\[0.15cm]
 		 \fint_{\Omega_{35r}}G(|Dh|)\,dz
 		 &\leq \frac{2^{n+1}\lambda\delta}{4}, \\[0.15cm]
 		 \fint_{\Omega_{35r}}G(|D\psi_i|)\,dz
 		 &\leq \frac{2^{n+1}\lambda\delta}{4}, \quad i=1,2.
 		\end{aligned}
 		\right.
 	\end{equation}
 	To proceed, we introduce the following standard rescaling argument:
 	\begin{equation*} 
 		\left\{
 		\begin{aligned}
 			&\tilde{u}(x) = \frac{u(7rx)}{2^{n+1} \lambda 7r}, \quad \tilde{F}(x) = \frac{F( 7rx)}{2^{n+1} \lambda r},  \quad \tilde{\psi_i}(x) = \frac{\psi_i(7 rx)}{2^{n+1} \lambda 7r}, \quad  i=1,2, \\[0.15cm]
 			&\tilde{a}(x,\eta) = a(7 rx,2^{n+1} \lambda \eta), \quad 	\tilde{b}(x,\eta) = b( 7rx,2^{n+1}\lambda \eta), \quad \tilde{h}(x) = \frac{h(7 rx)}{2^{n+1} \lambda 7r},   \\[0.15cm]
 			&\tilde{g}(t) = g(2^{n+1}\lambda t), \quad \tilde{G}(t) =\int_0^t \tilde{g}(s) ds, \quad t \geq 0.  \\[0.15cm]
 		\end{aligned}
 		\right.
 	\end{equation*} 
 	Under this scaling, conditions \eqref{ass1} and \eqref{ass2} remain satisfied, and we clearly have $\frac{R}{7r} > 5$.
 	Proceeding analogously to the interior case, we apply Lemmas \ref{uw1}, \ref{w1w2l}, \ref{w2w3l}, and \ref{w3w4b}. Scaling back the variables, we deduce the existence of a reference function $w_4 \in W^{1,G}(\Omega_{28r})$ satisfying
 	\begin{align*}
 		\fint_{\Omega_{21r}}
 		|V(Du)-V(Dw_4)|^2\,dz
 		 \leq 2^{n+1}\lambda \varepsilon_2 \quad \& \quad  \|G(|Dw_4|)\|_{L^\infty(\Omega_{14r})}
 		 \leq
 		2^{n+1}\lambda C_2 ,
 	\end{align*}
 where $\varepsilon_2$ is a small parameter to be determined later, and $C_2 > 1$ is a constant depending only on $n, l, L, i_g, s_g$.
 By further enforcing the condition
 	\[
 C\geq \max\{2^{n+2}c_vC_2,27^n\},
 	\]
 	we can argue exactly as in the case $B_{5r}(y) \subset \Omega$ to deduce the set inclusion
 	\begin{align*}
 		\left\{
 		z\in\Omega_{11r}:
 		\mathcal{M}(G(|Du|))>\lambda C
 		\right\}
 		\subset
 		\left\{
 		z\in\Omega_{11r}:
 		\mathcal{M}_{\Omega_{21r}}
 		\left(
 		|V(Du)-V(Dw_4)|^2
 		\right)
 		>2^{n+1}\lambda
 		\right\},
 	\end{align*}
This inclusion naturally yields the measure estimate
 	\begin{align*}
 		\left|
 		\left\{
 		z\in\Omega_{11r}:
 		\mathcal{M}(G(|Du|))>\lambda C
 		\right\}
 		\right|
 		<
 		c\varepsilon_2|B_r(y)|.
 	\end{align*}
 	Finally, observing that $\Omega_r(y) \subset \Omega_{11r}$, we arrive at the desired local bound
 	\begin{align*}
 		\left|
 		\left\{
 		z\in\Omega_r(y):
 		\mathcal{M}(G(|Du|))>\lambda C
 		\right\}
 		\right|
 		<
 		c\varepsilon_2|B_r(y)|.
 	\end{align*}
 The proof is then completed by choosing $\varepsilon_2$ sufficiently small such that $c\varepsilon_2 \leq \varepsilon$.
 \end{proof}

 \begin{lemma}\label{cor5.2}
 	Let $\omega$ be a weight in $A_q$ for $1<q<\infty$, and let $C>1$ be the constant given in Lemma \ref{lem5.1}. Then, for any $\varepsilon>0$, there exists 
 	$
 	\delta=\delta(\varepsilon,n,l,L,i_g,s_g,\omega)
 	\in\left(0,\frac18\right)
 	$ 
 	such that if $(a,\Omega)$ satisfies the $(\delta,R)$-vanishing condition, the following property holds. For any $y\in\Omega$, $r\in(0,\frac{R}{40})$, and $\lambda\geq1$, provided that the condition
 	\begin{align}\label{5.10}
 		\omega
 		\left(
 		\left\{
 		x\in\Omega_r(y):
 		\mathcal{M}(G(|Du|))>\lambda C
 		\right\}
 		\right)
 		\geq
 		\varepsilon\omega(B_r(y))
 	\end{align}
 	is satisfied, we have  
 	\begin{align}\label{5.11}
 		\Omega_r(y)
 		\subset&
 		\left\{
 		x\in\Omega:
 		\mathcal{M}(G(|Du|))>\lambda
 		\right\}
 		\cup
 		\left\{
 		x\in\Omega:
 		\mathcal{M}(G(|F|))>\frac{\lambda\delta}{4}
 		\right\}
 		\nonumber\\
 		&\cup
 		\left\{
 		x\in\Omega:
 		\mathcal{M}(G(|D\psi_1|))>\frac{\lambda\delta}{4}
 		\right\} 	 \cup
 		\left\{
 		x\in\Omega:
 		\mathcal{M}(G(|D\psi_2|))>\frac{\lambda\delta}{4}
 		\right\} 	\nonumber\\
 		&\cup
 		\left\{
 		x\in\Omega:
 		\mathcal{M}(G(|Dh|))>\frac{\lambda\delta}{4}
 		\right\}. 
 	\end{align}
 \end{lemma}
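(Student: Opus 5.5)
The plan is to argue by contraposition and transfer the unweighted measure estimate of Lemma \ref{lem5.1} to the weighted setting using the doubling-type comparison in Lemma \ref{weight2}. Fix $\varepsilon>0$ and let $\nu,d>0$ be the constants of Lemma \ref{weight2}, which depend only on $n$ and $[\omega]_{A_q}$. Choose an auxiliary $\varepsilon'>0$ with $d(\varepsilon')^{\nu}\le\varepsilon$, say $\varepsilon'=(\varepsilon/(2d))^{1/\nu}$. Then let $\delta=\delta(\varepsilon',n,l,L,i_g,s_g)\in(0,\frac18)$ be the value furnished by Lemma \ref{lem5.1} for this $\varepsilon'$. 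Since $\varepsilon'$ depends on $\omega$ only through $[\omega]_{A_q}$, $\delta$ has the announced dependence. The constant $C$ is the one from Lemma \ref{lem5.1}, and it is independent of $\varepsilon$.

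Suppose \eqref{5.11} fails for some $y\in\Omega$, $r\in(0,\frac R{40})$ and $\lambda\ge1$. Then there is a point $x\in\Omega_r(y)$ lying outside all five sets on the right-hand side of \eqref{5.11}. This says exactly that $\mathcal{M}(G(|Du|))(x)\le\lambda$ and that each of the maximal functions of $G(|F|)$, $G(|D\psi_1|)$, $G(|D\psi_2|)$ and $G(|Dh|)$ at $x$ is at most $\lambda\delta/4$. Hence the set in \eqref{ml} is nonempty, and Lemma \ref{lem5.1} gives
\[
|E|<\varepsilon'|B_r(y)|,\qquad E:=\{x\in\Omega_r(y):\mathcal{M}(G(|Du|))>\lambda C\}.
\]

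Since $E\subset B_r(y)$, we can apply the upper bound in Lemma \ref{weight2} with $L=E$ and $B=B_r(y)$. This yields
\[
\omega(E)\le d\Bigl(\tfrac{|E|}{|B_r(y)|}\Bigr)^{\nu}\omega(B_r(y))<d(\varepsilon')^{\nu}\omega(B_r(y))\le\varepsilon\,\omega(B_r(y)).
\]
This contradicts \eqref{5.10}, so \eqref{5.11} must hold.

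The only delicate point is bookkeeping. The $\delta$ fed into Lemma \ref{lem5.1} must be chosen after $\varepsilon'$ has been fixed, and the same $\delta$ must appear in the thresholds $\lambda\delta/4$ in both lemmas. We also need the strict inequality from Lemma \ref{lem5.1} together with $d(\varepsilon')^\nu\le\varepsilon$ to produce strict failure of \eqref{5.10}. Beyond that bookkeeping, the argument is a direct contrapositive.
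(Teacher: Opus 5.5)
Your argument is correct and is essentially the paper's proof: both apply Lemma \ref{lem5.1} with an auxiliary smallness parameter of the form $(\varepsilon/d)^{1/\nu}$, then convert the resulting Lebesgue-measure bound into a weighted one via the upper inequality of Lemma \ref{weight2}. The only difference is presentational: you fix $\delta$ explicitly up front and argue by contrapositive, whereas the paper phrases the same reasoning as a proof by contradiction.
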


 \begin{proof}
 	We argue by contradiction. Suppose that there exists an $\varepsilon_0 > 0$ such that for any $\delta > 0$, we can find a weak solution $u \in \mathcal{A}_h(\Omega)$ to the variational inequality \eqref{u} with $(a, \Omega)$ satisfying the $(\delta, R)$-vanishing condition, possessing the following property: for some $y \in \Omega$, $r \in (0, \frac{R}{40})$, and $\lambda > 1$, condition \eqref{5.10} holds with $\varepsilon = \varepsilon_0$, but the inclusion \eqref{5.11} is false.

 Consequently, we can assume that there exist $y_0 \in \Omega$, $r_0 \in (0, \frac{R}{40})$, and $\lambda_0 \geq 1$ such that
 	\begin{align}\label{5.12}
 		\omega
 		\left(
 		\left\{
 		x\in\Omega_{r_0}(y_0):
 		\mathcal{M}(G(|Du|))>\lambda_0C
 		\right\}
 		\right)
 		\geq
 		\varepsilon_0\omega(B_{r_0}(y_0))
 	\end{align}
 and the associated level set is non-empty, i.e.,
 \begin{align*}
 	&\left\{
 	\begin{aligned}
 		x\in\Omega_{r_0}(y_0):\quad
 		&\mathcal{M}\bigl(G(|Du|)\bigr)(x)\leq\lambda_0,\quad
 		\mathcal{M}\bigl(G(|F|)\bigr)(x)
 		\leq\frac{\lambda_0\delta}{4},
 		\\
 		&\mathcal{M}\bigl(G(|D\psi_1|)\bigr)(x)
 		\leq\frac{\lambda_0\delta}{4},\quad
 		\mathcal{M}\bigl(G(|D\psi_2|)\bigr)(x)
 		\leq\frac{\lambda_0\delta}{4},
 		\\
 		&\mathcal{M}\bigl(G(|Dh|)\bigr)(x)
 		\leq\frac{\lambda_0\delta}{4}
 	\end{aligned}
 	\right\}
 	\neq\varnothing .
 \end{align*}
 Based on the selection of $C$ and the application of Lemma \ref{lem5.1}, we can choose $\delta$ small enough to guarantee that
 	\begin{align*}
 		\left|
 		\left\{
 		x\in\Omega_{r_0}(y_0):
 		\mathcal{M}(G(|Du|))>\lambda_0C
 		\right\}
 		\right|
 		<
 		\left(\frac{\epsilon_0}{d}\right)^{\frac1\nu}
 		|B_{r_0}(y_0)|,
 	\end{align*}
 	where $d$ and $\nu$ are the structural constants associated with the weight $\omega$ as given in Lemma \ref{weight2}.
 	Then applying Lemma \ref{weight2} to obtain
 	\begin{align*}
 		&\omega
 		\left(
 		\left\{
 		x\in\Omega_{r_0}(y_0):
 		\mathcal{M}(G(|Du|))>\lambda_0C
 		\right\}
 		\right)
 		\nonumber\\
 		&\leq
 		d
 		\left(
 		\frac{
 			\left|
 			\left\{
 			x\in\Omega_{r_0}(y_0):
 			\mathcal{M}(G(|Du|))>\lambda_0C
 			\right\}
 			\right|
 		}
 		{|B_{r_0}(y_0)|}
 		\right)^\nu
 		\omega(B_{r_0}(y_0))
 		\nonumber\\
 		&<
 		\varepsilon_0\omega(B_{r_0}(y_0)).
 	\end{align*}
 This strictly contradicts our initial assumption \eqref{5.12}, thereby completing the proof.
 \end{proof}

\begin{lemma}\label{deome}
 Let $\omega \in A_q$ with $1 < q < \infty$. For any given $\varepsilon > 0$, there exists a constant 
 $\delta = \delta(n,l,L,i_g,s_g,\varepsilon, \omega) \in \left(0, \frac{1}{8}\right)$ 
 such that the following property holds.
 
 Suppose that for a collection of points $\{z_j\}_{j=1}^l \subset \overline{\Omega}$ and a radius $0 < r \leq \frac{R}{2000}$, the covering condition
 \[
 \overline{\Omega} \subset \bigcup_{j=1}^l B_r(z_j)
 \]
 is satisfied, alongside the density estimate
 \begin{align}\label{deome1}
 	\omega(\{x \in \Omega : \mathcal{M}(G(|Du|))(x) > C\}) < \varepsilon \omega(B_r(z_j)) \quad \text{for each }
 	j\in\{1, \dots, l\}. 
 \end{align}
 Then, for every integer $k \in \mathbb{N}$, we have the bound
 \begin{align}\label{deome2}
 	\omega(\{x \in \Omega : \mathcal{M}(G(|Du|))(x) > C^k\})
 	&\leq \varepsilon_1^k \omega(\{x \in \Omega : \mathcal{M}(G(|Du|))(x) > 1\}) \nonumber\\ 
 	&\quad + \sum_{i=1}^k \varepsilon_1^i \omega(\{x \in \Omega : \mathcal{M}(G(|F|))(x) > \frac{C^{k-i}\delta}{4}\}) \nonumber\\  
 	&\quad +\sum_{i=1}^k \varepsilon_1^i \omega(\{x \in \Omega : \mathcal{M}(G(|D\psi_1|))(x) > \frac{C^{k-i}\delta}{4}\}) \nonumber\\ 
 	&\quad +\sum_{i=1}^k \varepsilon_1^i \omega(\{x \in \Omega : \mathcal{M}(G(|D\psi_2|))(x) > \frac{C^{k-i}\delta}{4}\}) \nonumber\\ 
 	&\quad +\sum_{i=1}^k \varepsilon_1^i \omega(\{x \in \Omega : \mathcal{M}(G(|Dh|))(x) > \frac{C^{k-i}\delta}{4}\}),
 \end{align}
 where $C=C(n,l,L,i_g,s_g) > 1$ is the constant given in Lemma \ref{lem5.1}, and $\varepsilon_1 = 20^{nq}[\omega]_{A_q}^2\varepsilon$.
\end{lemma}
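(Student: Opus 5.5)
The proof goes by induction on $k$, with the covering Lemma \ref{weight5} doing the work at each level and Lemma \ref{cor5.2} supplying its density hypothesis.

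\textbf{Base case $k=1$.} Set
\[
C_\ast=\{x\in\Omega:\mathcal{M}(G(|Du|))(x)>C\},
\]
and let $D_\ast$ be the union of the five superlevel sets
\[
\{x\in\Omega:\mathcal{M}(G(|Du|))>1\},\qquad \{x\in\Omega:\mathcal{M}(G(|F|))>\delta/4\},
\]
together with the analogous sets for $D\psi_1$, $D\psi_2$ and $Dh$ at level $\delta/4$. Since $C>1$, we have $C_\ast\subset D_\ast\subset\Omega$. The first hypothesis of Lemma \ref{weight5} is exactly \eqref{deome1}. For the second, take $\rho\in(0,2r]$ and $y\in\Omega$. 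Because $r\le R/2000$, we have $\rho<R/40$, so Lemma \ref{cor5.2} applies with $\lambda=1$. It shows that $\omega(C_\ast\cap B_\rho(y))\ge\varepsilon\,\omega(B_\rho(y))$ forces $\Omega_\rho(y)\subset D_\ast$.

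Here $C_\ast\cap B_\rho(y)$ coincides with the set $\{x\in\Omega_\rho(y):\mathcal{M}(G(|Du|))>C\}$ appearing in \eqref{5.10}, so the hypothesis matches. We also fix $\delta$ as the one produced by Lemma \ref{cor5.2} for this $\varepsilon$; it depends on $\omega$ through $d$ and $\nu$, which is allowed. Lemma \ref{weight5} then gives $\omega(C_\ast)\le\varepsilon_1\omega(D_\ast)$. Subadditivity of $\omega$ over the five sets gives \eqref{deome2} for $k=1$.

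\textbf{Inductive step.} Assume \eqref{deome2} holds for $k$. To pass to $k+1$, I would apply the covering argument to a rescaled solution so that the level $C$ becomes $C^{k}$. Set
\[
\tilde u=u/\lambda_k,\quad \tilde F=F/\lambda_k,\quad \tilde\psi_i=\psi_i/\lambda_k,\quad \tilde h=h/\lambda_k,
\]
where $\lambda_k$ is chosen with $G(\lambda_k)$ comparable to $C^k$. The structure conditions \eqref{u1}--\eqref{u3} and the vanishing condition are invariant under this scaling, with $\tilde g(t)=g(\lambda_k t)/G(\lambda_k)\cdot\lambda_k$ or a similar normalization. Equivalently, and more simply, I would repeat the base-case argument directly with $\lambda=C^k$, using that Lemma \ref{cor5.2} is stated for all $\lambda\ge1$.

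For this I need the first hypothesis of Lemma \ref{weight5} at the new level:
\[
\omega(\{\mathcal{M}(G(|Du|))>C^{k+1}\})\le\omega(\{\mathcal{M}(G(|Du|))>C\})<\varepsilon\,\omega(B_r(z_j)),
\]
which holds because $C^{k+1}\ge C$. The covering lemma then yields
\[
\omega(\{\mathcal{M}(G(|Du|))>C^{k+1}\})\le\varepsilon_1\Bigl[\omega(\{\mathcal{M}(G(|Du|))>C^{k}\})+\textstyle\sum_{f}\omega(\{\mathcal{M}(G(|f|))>C^{k}\delta/4\})\Bigr],
\]
where $f$ ranges over $F$, $D\psi_1$, $D\psi_2$, $Dh$. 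Inserting the induction hypothesis for the first term and reindexing the sums produces \eqref{deome2} at level $k+1$: the new terms are the $i=1$ terms, and the old ones shift to exponent $i+1$.

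\textbf{Main obstacle.} The delicate point is that $\delta$ must be independent of $\lambda$. This holds because Lemma \ref{cor5.2} fixes $\delta$ uniformly for all $\lambda\ge1$. I would verify this against the scaling used in Lemma \ref{lem5.1}, where the rescaled $\tilde G$ is normalized by $2^n\lambda$. If that normalization causes trouble, I would instead absorb $\lambda$ into the maximal-function thresholds, as is already done there.
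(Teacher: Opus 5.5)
Your proposal is correct and matches the paper's proof. The paper fixes $\delta$ from Lemma~\ref{cor5.2} and uses $\omega(C_k)\le\omega(C_1)<\varepsilon\,\omega(B_r(z_j))$; it then applies Lemma~\ref{cor5.2} with $\lambda=C^{k-1}$ to verify the hypotheses of Lemma~\ref{weight5}, obtains $\omega(C_k)\le\varepsilon_1\omega(D_k)$, and iterates, just as your induction does. The rescaling detour in your inductive step is unnecessary, and the normalization of $\tilde g$ you write there is not correct; as you conclude yourself, Lemma~\ref{cor5.2} already works with one $\delta$ for all $\lambda\ge1$.
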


 \begin{proof}
 For a fixed $\varepsilon>0$, we begin by choosing $\delta$ according to Lemma \ref{cor5.2}. Next, for each $k\in\mathbb{N}$, we introduce the level sets
 	\begin{align*}
 		C_k
 		:=
 		\left\{
 		x\in\Omega:
 		\mathcal{M}(G(|Du|))(x)>C^k
 		\right\}
 	\end{align*}
 	and
 	\begin{align*}
 		D_k
 		:=&
 		\left\{
 		x\in\Omega:
 		\mathcal{M}(G(|Du|))(x)>C^{k-1}
 		\right\}
 		\nonumber\\
 		&\cup
 		\left\{
 		x\in\Omega:
 		\mathcal{M}(G(|F|))(x)>\frac{C^{k-1}\delta}{4}
 		\right\}
 		\nonumber\\
 		&\cup
 		\left\{
 		x\in\Omega:
 		\mathcal{M}(G(|D\psi_1|))(x)>\frac{C^{k-1}\delta}{4}
 		\right\} 
 			\nonumber\\
 		&\cup
 		\left\{
 		x\in\Omega:
 		\mathcal{M}(G(|D\psi_2|))(x)>\frac{C^{k-1}\delta}{4}
 		\right\}
 			\nonumber\\
 		&\cup
 		\left\{
 		x\in\Omega:
 		\mathcal{M}(G(|Dh|))(x)>\frac{C^{k-1}\delta}{4}
 		\right\}
 	\end{align*}
 	It is evident that $C_k\subset C_1$ for all $k\geq1$. Combining this inclusion with \eqref{deome1} yields
 	\begin{align*}
 		\omega(C_k)
 		\leq
 		\omega(C_1)
 		<
 		\varepsilon\omega(B_r(z_j)),
 		\qquad
 		\text{for all }j=1,\ldots,l .
 	\end{align*}

 	Furthermore, invoking Lemma \ref{cor5.2} with $\lambda=C^{k-1}$ ensures that all the hypotheses of Lemma \ref{weight5} are fulfilled. Consequently, we deduce that
 	\begin{align*}
 		\omega(C_k)
 		\leq
 		\varepsilon_1\omega(D_k),
 		\qquad
 		\text{for any }k\geq1 .
 	\end{align*}
Expanding this inequality explicitly, we obtain
 	\begin{align*}
 		&\omega
 		\left(
 		\left\{
 		x\in\Omega:
 		\mathcal{M}(G(|Du|))(x)>C^k
 		\right\}
 		\right)
 		\nonumber\\
 		&\leq
 		\varepsilon_1
 		\omega
 		\left(
 		\left\{
 		x\in\Omega:
 		\mathcal{M}(G(|Du|))(x)>C^{k-1}
 		\right\}
 		\right)
 		\nonumber\\
 		&\quad+
 		\varepsilon_1
 		\omega
 		\left(
 		\left\{
 		x\in\Omega:
 		\mathcal{M}(G(|F|))(x)>\frac{C^{k-1}\delta}{4}
 		\right\}
 		\right)
 		\nonumber\\
 		&\quad+
 		\varepsilon_1
 		\omega
 		\left(
 		\left\{
 		x\in\Omega:
 		\mathcal{M}(G(|D\psi_1|))(x)>\frac{C^{k-1}\delta}{4}
 		\right\}
 		\right)
 		\nonumber\\
 		&\quad+
 		\varepsilon_1
 		\omega
 		\left(
 		\left\{
 		x\in\Omega:
 		\mathcal{M}(G(|D\psi_2|))(x)>\frac{C^{k-1}\delta}{4}
 		\right\}
 		\right)
 		\nonumber\\
 		&\quad+
 		\varepsilon_1
 		\omega
 		\left(
 		\left\{
 		x\in\Omega:
 		\mathcal{M}(G(|Dh|))(x)>\frac{C^{k-1}\delta}{4}
 		\right\}
 		\right)
 	\end{align*}
for any $k\geq1$. Finally, iterating this relation inductively yields the desired estimate \eqref{deome2}.
 \end{proof}

\begin{proof}[Proof of Theorem \ref{weight}]
 From Lemmas \ref{weight1} and \ref{weight3}, we deduce that $\omega\in
A_{i(\Phi)-\varepsilon_0}$ for some $\varepsilon_0>0$. Because $i(\Phi)>1$,
we assume that $i(\Phi)-\varepsilon_0>1$ and
\begin{equation}\label{js}
	\left\{
	\begin{aligned}
		\int_{\Omega}
		(\Phi\circ G)(|Dh|)\,\omega(x)\,dx
		&\leq \delta^{I(\Phi)-\varepsilon_0},
		\\[0.15cm]
		\int_{\Omega}
		(\Phi\circ G)(|F|)\,\omega(x)\,dx
		&\leq \delta^{I(\Phi)-\varepsilon_0},
		\\[0.15cm]
		\int_{\Omega}
		(\Phi\circ G)(|D\psi_i|)\,\omega(x)\,dx
		&\leq \delta^{I(\Phi)-\varepsilon_0},
		\qquad i=1,2,
	\end{aligned}
	\right.
\end{equation}
where $\delta \in(0,\frac{1}{8})$ will be selected later. Using $\Phi \in \Delta_2$, we have
$$\Phi(Kt)\rs c_{\varepsilon_0} \min\left\lbrace K^{i(\Phi)-\varepsilon_0}, K^{I(\Phi)-\varepsilon_0}\right\rbrace\Phi(t), \quad for\ all \  K,t\geqslant0. $$
By $\omega \in A_{i(\Phi)-\varepsilon_0} \subset A_{I(\Phi)-\varepsilon_0}$ and   \holder's    inequality, we calculate that
\begin{eqnarray*}
&&\int_{\Omega \cap \left\lbrace G(|Dh|)\geqslant1\right\rbrace }G(|Dh|)dx \\
&\ls& \left( \int_{\Omega \cap \left\lbrace G(|Dh|)\geqslant1\right\rbrace }G(|Dh|)^{i(\Phi)-\varepsilon_0}\omega(x)  dx \right)^{\frac{1}{i(\Phi)-\varepsilon_0}} \left( \omega^{\frac{-1}{i(\Phi)-\varepsilon_0-1}}(\Omega) \right) ^{\frac{i(\Phi)-\varepsilon_0-1}{i(\Phi)-\varepsilon_0}} \\
&\ls& c \left(\int_{\Omega} (\Phi\circ G)(|Dh|)\omega(x)dx\right) ^{\frac{1}{i(\Phi)-\varepsilon_0}}  \ls c\delta^{\frac{I(\Phi)-\varepsilon_0}{i(\Phi)-\varepsilon_0}} \ls  c\delta.
\end{eqnarray*}
Similarly,
\begin{eqnarray*}
\int_{\Omega \cap \left\lbrace G(|Dh|)<1\right\rbrace }G(|Dh|)dx \ls c \left(\int_{\Omega} (\Phi\circ G)(|Dh|)\omega(x)dx\right) ^{\frac{1}{I(\Phi)-\varepsilon_0}} \ls c\delta.
\end{eqnarray*}
Combining  the above two inequalities, we obtain
\begin{equation*}
\int_{\Omega }G(|Dh|)dx \ls c \delta.
\end{equation*}
Similarly,
\begin{equation*}
\int_{\Omega }G(|F|)dx \ls c \delta \ \ \ \ \  \& \ \ \ \ \  \int_{\Omega }G(|D\psi_i|)dx \ls c \delta, \quad i=1,2.
\end{equation*}
We take comparison function $v=h$ in the inequality \eqref{u} and make use of  \eqref{u1}, Lemma \ref{gyoung}, \eqref{a(x)4} and Lemma \ref{vg} to find
\begin{align*}
	\int_{\Omega} G(|Du|)\,\mathrm{d}x
	&\leq c\int_{\Omega}a(x,Du)\cdot Du\,\mathrm{d}x
	\nonumber\\
	&\leq c\int_{\Omega}a(x,Du)\cdot Dh\,\mathrm{d}x
	+c\int_{\Omega}b(x,F)\cdot(Du-Dh)\,\mathrm{d}x
	\nonumber\\
	&\leq c\varepsilon_1\int_{\Omega}G(|Du|)\,\mathrm{d}x
	+c(\varepsilon_1)\int_{\Omega}G(|Dh|)\,\mathrm{d}x
	\nonumber\\
	&\quad
	+c\varepsilon_2\int_{\Omega}G(|Du|)\,\mathrm{d}x
	+c(\varepsilon_2)\int_{\Omega}G(|F|)\,\mathrm{d}x.
	\label{energy-estimate}
\end{align*}
Now we choose $\varepsilon_1, \varepsilon_2$ small enough to get
\begin{equation}\label{dudel}
\int_{\Omega} G(|Du|)dx \ls c\int_{\Omega} G(|F|)+G(|Dh|)dx \ls c \delta.
\end{equation}
{
Next, for a fixed \( \varepsilon > 0 \), we choose \( C \) and \( \delta \) as in Lemma \ref{deome}. Using the boundedness of \( \Omega \), we can find a finite set of points \(\{z_j\}_{j=1}^l \subset \bar{\Omega}\) and a ball \( B \) satisfying
\[
\bar{\Omega} \subset \bigcup_{j=1}^l B_r(z_j) \subset B, \quad \text{with } r = \frac{R}{2000}.
\]
Then we apply the weak $(1,1)$ type estimate
\[
|\{x \in \mathbb{R}^n : \mathcal{M}(f)(x) > k\}| \leq \frac{c}{k} \int_{\mathbb{R}^n} |f(x)| dx,
\quad\forall\, k > 0
\]
and use \eqref{dudel} to obtain 
\begin{eqnarray*}
|\{x \in \Omega : \mathcal{M}(G(|Du|))(x) > C\}| &\leq& \frac{c}{C} \int_{\Omega} G(|Du|) dx< c\delta \\
&<& \left\{ \frac{\varepsilon}{d^2} \left( \frac{|B_r(z_j)|}{|B|} \right)^{i(\Phi)} \right\}^{\frac{1}{\nu}} |B|,
\end{eqnarray*}
where \( d \) and \( \nu \) are as in Lemma \ref{weight2}, and \( \delta > 0 \) is additionally chosen small enough to ensure the last inequality.  Now, from Lemma \ref{weight2},
it follows that
\begin{equation*}
\omega(\{x \in \Omega : \mathcal{M}(G(|Du|))(x) > C\}) < \frac{\varepsilon}{d} \left( \frac{|B_r(z_j)|}{|B|} \right)^{i(\Phi)} \omega(B)
\end{equation*}
and
\[
\omega(B) \leq d \left( \frac{|B|}{|B_r(z_j)|} \right)^{i(\Phi)} \omega(B_r(z_j)), \quad\forall\,
j\in\{1, \ldots, l\}.
\]
Combining these estimates yields
\[
\omega(\{x \in \Omega : \mathcal{M}(G(|Du|))(x) > C\}) < \varepsilon \omega(B_r(z_j)), \quad\forall\,
j\in\{1, \ldots, l\},
\]
which, together with Lemma \ref{deome}, further implies that, for
\(
\varepsilon_1 = 20^{ni(\Phi)}[\omega]_{i(\Phi)}^{2}\varepsilon,
\)
\begin{eqnarray*}
 \omega(\{x \in \Omega : \mathcal{M}(G(|Du|))(x) > C^k\}) &\leq& \varepsilon_1^k \omega(\{x \in \Omega : \mathcal{M}(G(|Du|))(x) > 1\})  \\
&&\hspace{0.1cm}+ \sum_{i=1}^k \varepsilon_1^i \omega(\{x \in \Omega : \mathcal{M}(G(|F|))(x) > \frac{C^{k-i} \delta}{4}\}) \\
&&\hspace{0.1cm}+ \sum_{i=1}^k \varepsilon_1^i \omega(\{x \in \Omega : \mathcal{M}(G(|D\psi_1|))(x) > \frac{C^{k-i} \delta}{4}\})
\\
&&\hspace{0.1cm}+ \sum_{i=1}^k \varepsilon_1^i \omega(\{x \in \Omega : \mathcal{M}(G(|D\psi_2|))(x) > \frac{C^{k-i} \delta}{4}\})
\\
&&\hspace{0.1cm}+ \sum_{i=1}^k \varepsilon_1^i \omega(\{x \in \Omega : \mathcal{M}(G(|Dh|))(x) > \frac{C^{k-i} \delta}{4}\})
\end{eqnarray*}
From $\Phi \in \Delta_2$, we deduce that
$$\Phi(Kt)\ls c_{\varepsilon_0} \max\left\lbrace K^{i(\Phi)-\varepsilon_0}, K^{I(\Phi)-\varepsilon_0}\right\rbrace\Phi(t), \quad for \ all \ K,t\geqslant0. $$
We then use these estimates to compute as follows:
\begin{eqnarray*}
&&\sum_{k=1}^\infty \Phi(C^k) \omega(\{x \in \Omega : \mathcal{M}(G(|Du|))(x) > C^k\}) \\
& \leq& \sum_{k=1}^\infty \varepsilon_1^k \Phi(C^k) \omega(\{x \in \Omega : \mathcal{M}(G(|Du|))(x) > 1\}) \\
&&\hspace{0.1cm}+ c \sum_{i=1}^{\infty} (\varepsilon_1 C^{I(\Phi) - \varepsilon_0})^i \sum_{k=i}^{\infty} \Phi(C^{k-i}) \omega(\{x \in \Omega : \mathcal{M}(G(|F|))(x) > \frac{C^{k-i} \delta}{4}\}) \\
&&\hspace{0.1cm}+ c \sum_{i=1}^{\infty} (\varepsilon_1 C^{I(\Phi) - \varepsilon_0})^i \sum_{k=i}^{\infty} \Phi(C^{k-i}) \omega(\{x \in \Omega : \mathcal{M}(G(|D\psi_1|))(x) > \frac{C^{k-i} \delta}{4}\}) \\
&&\hspace{0.1cm}+ c \sum_{i=1}^{\infty} (\varepsilon_1 C^{I(\Phi) - \varepsilon_0})^i \sum_{k=i}^{\infty} \Phi(C^{k-i}) \omega(\{x \in \Omega : \mathcal{M}(G(|D\psi_2|))(x) > \frac{C^{k-i} \delta}{4}\}) \\
&&\hspace{0.1cm}+ c \sum_{i=1}^{\infty} (\varepsilon_1 C^{I(\Phi) - \varepsilon_0})^i \sum_{k=i}^{\infty} \Phi(C^{k-i}) \omega(\{x \in \Omega : \mathcal{M}(G(|Dh|))(x) > \frac{C^{k-i} \delta}{4}\}) \\
&\leq& c\bigl(3\Phi(1)\omega(\Omega) + S_1 + S_2+ S_3+ S_4 \bigr) \sum_{k=1}^{\infty} (\varepsilon_1 C^{I(\Phi) - \varepsilon_0})^k,
\end{eqnarray*}
where
\begin{equation*}
	\left\{
	\begin{aligned}
		S_1
		&:= \sum_{j=1}^{\infty}\Phi(C^j)\,
		\omega\left(
		\left\{
		x\in\Omega:
		\mathcal{M}\bigl(G(|F|)\bigr)(x)
		> \frac{C^j\delta}{4}
		\right\}
		\right),
		\\[0.15cm]
		S_2
		&:= \sum_{j=1}^{\infty}\Phi(C^j)\,
		\omega\left(
		\left\{
		x\in\Omega:
		\mathcal{M}\bigl(G(|D\psi_1|)\bigr)(x)
		> \frac{C^j\delta}{4}
		\right\}
		\right),
		\\[0.15cm]
		S_3
		&:= \sum_{j=1}^{\infty}\Phi(C^j)\,
		\omega\left(
		\left\{
		x\in\Omega:
		\mathcal{M}\bigl(G(|D\psi_2|)\bigr)(x)
		> \frac{C^j\delta}{4}
		\right\}
		\right),
		\\[0.15cm]
		S_4
		&:= \sum_{j=1}^{\infty}\Phi(C^j)\,
		\omega\left(
		\left\{
		x\in\Omega:
		\mathcal{M}\bigl(G(|Dh|)\bigr)(x)
		> \frac{C^j\delta}{4}
		\right\}
		\right).
	\end{aligned}
	\right.
\end{equation*}
It follows from Lemmas \ref{weight3}  and \ref{weight4}, \eqref{js} that
\begin{align*}
S_1 &\leq c \int_{\Omega} \Phi\left(\frac{G(|F|)}{\delta}\right) \omega(x)\,dx
 \leq c \int_{\Omega} \Phi\left(\frac{G(|F|)}{\delta}\right) \omega(x)\,dx \\
    &\leq \frac{c}{\delta^{I(\Phi) - \varepsilon_0}} \int_{\Omega} (\Phi \circ G)(|F|) \omega(x)\,dx
\leq c,
\end{align*}
and, similarly,
\begin{equation*}
	\left\{
	\begin{aligned}
		S_2
		&\leq
		\frac{c}{\delta^{I(\Phi)-\varepsilon_0}}
		\int_{\Omega}
		(\Phi\circ G)(|D\psi_1|)\,\omega(x)\,dx
		\leq c,
		\\[0.2cm]
		S_3
		&\leq
		\frac{c}{\delta^{I(\Phi)-\varepsilon_0}}
		\int_{\Omega}
		(\Phi\circ G)(|D\psi_2|)\,\omega(x)\,dx
		\leq c,
		\\[0.2cm]
		S_4
		&\leq
		\frac{c}{\delta^{I(\Phi)-\varepsilon_0}}
		\int_{\Omega}
		(\Phi\circ G)(|Dh|)\,\omega(x)\,dx
		\leq c.
	\end{aligned}
	\right.
\end{equation*}
Thus,
\[
\sum_{k=1}^{\infty} \Phi(C^k) \omega(\{x \in \Omega : \mathcal{M}(G(|Du|))(x) > C^k\}) \leq c \sum_{k=1}^{\infty} (\varepsilon_1 C^{I(\Phi) - \varepsilon_0})^k.
\]
Choosing \(\varepsilon > 0\) sufficiently small such that \(\varepsilon_1 C^{I(\Phi) - \varepsilon_0} < 1\), Lemma \ref{weight4} yields
\[
\int_{\Omega} \Phi(\mathcal{M}(G(|Du|))) \omega(x)\,dx \leq c.
\]
We finally conclude that
\begin{equation*}
\int_{\Omega}(\Phi\circ G)(|Du|)\omega(x)dx \ls c,
\end{equation*}
under the hypothesis \eqref{js}.
}

Now consider
\begin{equation*}
	\left\{
	\begin{aligned}
		\overline{u}
		&= \frac{u}{M},\qquad
		\overline{F}
		= \frac{F}{M},\qquad
		\overline{h}
		= \frac{h}{M},\qquad
		\overline{\psi_i}
		= \frac{\psi_i}{M},
		\quad i=1,2,
		\\[0.2cm]
		\overline{g}(t)
		&= g(Mt),\qquad
		\overline{G}(t)
		= \frac{G(Mt)}{M},\qquad
		\overline{\Phi}(t)
		= \frac{\Phi(Mt)}{M},
		\\[0.2cm]
		\overline{a}(x,\eta)
		&= a(x,M\eta),\qquad
		\overline{b}(x,\eta)
		= b(x,M\eta),
	\end{aligned}
	\right.
\end{equation*}
where
\begin{equation*}
	\begin{aligned}
		M:=\delta^{\varepsilon_0-I(\Phi)}
		\Bigg[
		&\int_{\Omega}(\Phi\circ G)(|F|)\,\omega(x)\,\mathrm{d}x
		+\int_{\Omega}(\Phi\circ G)(|D\psi_1|)\,\omega(x)\,\mathrm{d}x
		\\
		&+\int_{\Omega}(\Phi\circ G)(|D\psi_2|)\,\omega(x)\,\mathrm{d}x
		+\int_{\Omega}(\Phi\circ G)(|Dh|)\,\omega(x)\,\mathrm{d}x
		\Bigg].
	\end{aligned}
\end{equation*}
By \cite[Lemma 3.5]{cho}, we have $i(\overline{\Phi})=i(\Phi)$ and $I(\overline{\Phi})=I(\Phi)$,
which shows that
\begin{equation*}
	\left\{
	\begin{aligned}
		\int_{\Omega}
		(\overline{\Phi}\circ\overline{G})(|\overline{F}|)
		\,\omega(x)\,dx
		&=
		\frac{1}{M}
		\int_{\Omega}
		(\Phi\circ G)(|F|)
		\,\omega(x)\,dx
		\leq
		\delta^{I(\overline{\Phi})-\varepsilon_0},
		\\[0.2cm]
		\int_{\Omega}
		(\overline{\Phi}\circ\overline{G})(|D\overline{\psi_i}|)
		\,\omega(x)\,dx
		&=
		\frac{1}{M}
		\int_{\Omega}
		(\Phi\circ G)(|D\psi_i|)
		\,\omega(x)\,dx
		\leq
		\delta^{I(\overline{\Phi})-\varepsilon_0},
		\quad i=1,2,
		\\[0.2cm]
		\int_{\Omega}
		(\overline{\Phi}\circ\overline{G})(|D\overline{h}|)
		\,\omega(x)\,dx
		&=
		\frac{1}{M}
		\int_{\Omega}
		(\Phi\circ G)(|Dh|)
		\,\omega(x)\,dx
		\leq
		\delta^{I(\overline{\Phi})-\varepsilon_0}.
	\end{aligned}
	\right.
\end{equation*}
This further implies that
\begin{equation*}
\int_{\Omega}(\overline{\Phi}\circ \overline{G})(|D\overline{u}|)\omega(x)dx \ls c,
\end{equation*}
which completes the proof of \eqref{uff}.
\end{proof}

\hskip\parindent   
 
\section*{Acknowledgments}
The authors are supported by
  Sichuan Natural Science Foundation Youth Fund Project (Grant No.2025ZNSFSC0799 ), Natural Science Foundation of Hubei Province (JCZRYB202401287) and the  National Natural Science Foundation of China
  (Grant No.~12401122,12571226).

\textbf{Data Availability Statement} This manuscript has no associated data.

\textbf{ Conflict Of Interest Statement} We confirm that we do not have any conflict of interest.

\end{document}